\newtheorem{remark}{Remark}
\newtheorem{theorem}{Theorem}
\newtheorem{lemma}{Lemma}
\newcommand{\I}{\mathcal{I}}
\newcommand{\E}{E}
\DeclareMathOperator{\cl}{cl} 	
\DeclareMathOperator{\interior}{int} 
\renewcommand{\div}[1]{\nabla_{#1} \cdot} 
\newcommand{\dd}{{\rm d}}
\DeclareSymbolFont{rsfs}{U}{rsfs}{m}{n}
\DeclareSymbolFontAlphabet{\mathscrsfs}{rsfs}
\newcommand{\R}{\mathbb{R}}								
\newcommand{\N}{\mathbb{N}}								
\let\div\relax 
\DeclareMathOperator{\div}{div}							
\newcommand{\scalar}[2]{( #1, #2 )} 			
\newcommand{\paren}[1]{\left( #1 \right)} 						
\newcommand{\Tau}{\mathcal{T}}
\newcommand{\eqrel}[2]{\stackrel{\scriptscriptstyle{\eqref{#1}}}{#2}}
\title{Well-Posedness and Finite Element Approximation of Mixed Dimensional Partial Differential Equations}
\author{F. Hellman\footnotemark[1], A. Målqvist\footnotemark[1], M. Mosquera\footnote {Department of Mathematical Sciences, Chalmers University of Technology and University of Gothenburg, 412 96 Göteborg, Sweden}}
\date{}
\begin{document}

\maketitle

\begin{abstract}
We consider a mixed dimensional elliptic partial differential equation posed in a bulk domain with a large number of embedded interfaces. In particular, we study well-posedness of the problem and regularity of the solution. We also propose a fitted finite element approximation and prove an a priori error bound.  For the solution of the arising linear system we propose and analyze an iterative method based on subspace decomposition. Finally, we present numerical experiments and achieve rapid convergence using the proposed preconditioner, confirming our theoretical findings.
\end{abstract}

\section{Introduction}
Numerical simulation of diffusion processes in heterogeneous materials is computationally challenging since the data variation needs to be resolved. Thin structures like cracks, fractures, and reinforcements are particularly difficult to handle. It is often advantageous to instead model thin structures as lower dimensional interfaces, embedded in a bulk domain. Mathematically this results in a mixed dimensional partial differential equation (PDE) where the solution has bulk and interface components that are coupled weakly. The aim of the paper is to study well-posedness and regularity of mixed dimensional PDEs, construct and analyze a finite element method for solving the problem, and to develop and implement a preconditioner for efficient numerical solution of the resulting linear system.

Numerical solution to PDEs posed on surfaces is a well established field. An early contribution to finite element approximation of the Laplace--Beltrami equation on surfaces is due to Dziuk in \cite{Dz88}, where a finite element approximation is constructed on a polyhedral approximation of the surface. Other approaches include trace based methods as in \cite{BuHaLa19, BCHLM15, OlReGr09,}, where functions in the surface are represented by traces of functions in the ambient space of higher dimension. The review articles \cite{BoDeoNo20, DzEl13} survey several methods for solving PDEs on surfaces. Finite element methods for coupled bulk and interface problems are also well studied, see e.g.~\cite{ElRa12,MaJaRo05}.

Flow in porous media is probably the most prominent application for mixed dimensional PDEs, see \cite{AGPR19,BNY18,PFS17,FK18,MaJaRo05} and references therein. These methods are fitted, meaning that the full geometry is resolved by the mesh. The bulk is three dimensional and the interfaces are two dimensional surfaces representing fractures. There is also related work treating three dimensional bulk domains with one dimensional embedded structures. Such models include blood vessels embedded in tissue, see \cite{FKOWW22}.  

In this paper we consider an elliptic mixed dimensional model problem with a large number of interfaces. The interface model is inspired by the works \cite{BNY18, NoBoFuKe19} where a general framework for multidimensional representations of fractured domains is developed, and the work \cite{MaJaRo05} where Robin-type couplings between the bulk and interface are studied. In the bulk and interfaces, we consider Poisson's equation in $d$ and $d-1$ dimensions, respectively. In both cases, we use its primal form. 
We show that the model is well-posed and pay particular attention to problems with a large number of embedded interfaces and how that affects the coercivity bound of the variational formulation of the problem.
We further formulate a fitted finite element method and prove a priori error bounds. Additionally, we propose a domain decomposition method based on subspace correction allowing for efficient numerical solution of problems with complex coupled interface structures. In this part we take inspiration from recently developed subspace decomposition algorithms for spatial network models, see \cite{GoHeMa22, KY16}. We formulate the Schur complement on the union of interfaces and introduce coarse subspaces, using coarse finite element spaces in the bulk interpolated onto the interfaces. Finally, we present numerical examples in two spatial dimensions together with regularity analysis.


Outline: In Section~\ref{sec:problem} we present the problem formulation and in Section~\ref{sec:fem} we formulate a fitted finite element method and derive an a priori error bound. Section \ref{sec:iterative} is devoted to an iterative method for efficient solution of the resulting linear system. Finally, in Section \ref{sec:num_ex} we present numerical results.

\section{Problem formulation}
\label{sec:problem}
We consider Poisson's equation in the bulk as well as in the
interfaces and couple the solutions in bulk and interfaces by Robin
conditions. We are interested in the case when there is a large number
of interfaces in the domain. Next
follows a description of the geometrical setting to accomodate for
this partitioning. Then, we present the model problem on weak form and
establish its well-posedness.

\subsection{Mixed dimensional geometry}
We consider an open and connected domain $\Omega$ of $\R^d$ ($d=2$ or
$3$) that is partitioned into $d+1$ (not necessarily connected)
subdomains of different dimensionalities:
\begin{equation}
  \Omega = \Omega^0 \cup \cdots \cup \Omega^d
\end{equation}
where each $\Omega^c$ ($0 \le c \le d$ is the codimension)
in turn is composed into a finite number of disjoint and connected
\emph{subdomain segments} defined by a set $L_c$,
\begin{equation}
  \Omega^c = \bigcup_{\ell \in L_c} \Omega^c_\ell,
\end{equation}
each of which is a planar hypersurface of codimension $c$. 

In the following, we put some requirements on this decomposition of
$\Omega$. We define topological subspaces $X^c$ of $\R^d$ for
subdomains in codimension $c$ and topological properties such as
dense, closed, open, and operators such as closure ($\cl$), boundary
($\partial$) and interior ($\interior$) for subdomains of codimension
$c$ should be interpreted in the subspace topology of $X^c$. For a set $\omega$, 
the notation $\overline \omega$ denotes the closure of $\omega$ in
$\R^d$.

For codimension $c = 0$, we let $X^0 = \Omega$, while for $c > 0$, we
define the subspaces $X^c = X^{c-1} \setminus \Omega^{c-1}$.
We assume that for all $\ell \in L_c$, the subdomain segment
$\Omega^c_\ell$ is open and that their union $\Omega^c$ is
dense. 
We add the requirement that either
$\Omega^{c+1}_j \subseteq \partial \Omega^{c}_i$ or
$\Omega^{c+1}_j \cap \Omega^{c}_i = \emptyset$.  This makes it possible
to define the adjacency relation ${\E}_{c}$ between subdomain
segments $i \in L_{c}$ and $j \in L_{c+1}$  by
\begin{equation}
  \begin{aligned}
    (i, j) \in \E_{c}
    \qquad &\text{if } \Omega^{c+1}_{j} \subseteq \partial \Omega^{c}_{i}.
  \end{aligned}
\end{equation}
This requirement admits a dense partitioning of a subdomain segment
by subdomain segments of one codimension higher. Thus subdomain segment
boundary integrals can be expressed as sums of integrals over other
subdomain segments.

We focus entirely on the subdomains of codimension 0, 1 and 2. To
simplify notation, we introduce $I = L_0$, $J = L_1$, and $K =
L_2$. To reduce the complexity of the model, we have chosen to work in
a simplified setting: We assume that all subdomains segments
$\Omega^c_\ell$ are planar and polyhedrons, polygonals, lines or
points, whichever applies in their respective dimension, and that they
have Lipschitz boundary.  Lipschitz boundary rules out slit domains
and means that, for example, an interface cannot end inside a bulk
without connecting to another interface.
These assumptions can be
relaxed at the cost of a more detailed treatment of the interfaces as
is done in \cite{BNY18, NoBoFuKe19}. To handle curved interfaces, it
is possible to consider the planar setting here as an approximation of
the curved case. For sufficiently smooth curved interfaces, lift
operators on both bulk and interfaces exist that enable an estimate of
the error due to such a variational crime, see \cite{ElRa12}.

In Figure \ref{fig:domain_rules}, we see an example of a domain fulfilling our requirements.

\begin{figure}[h!]
	\includegraphics[width=.7\textwidth]{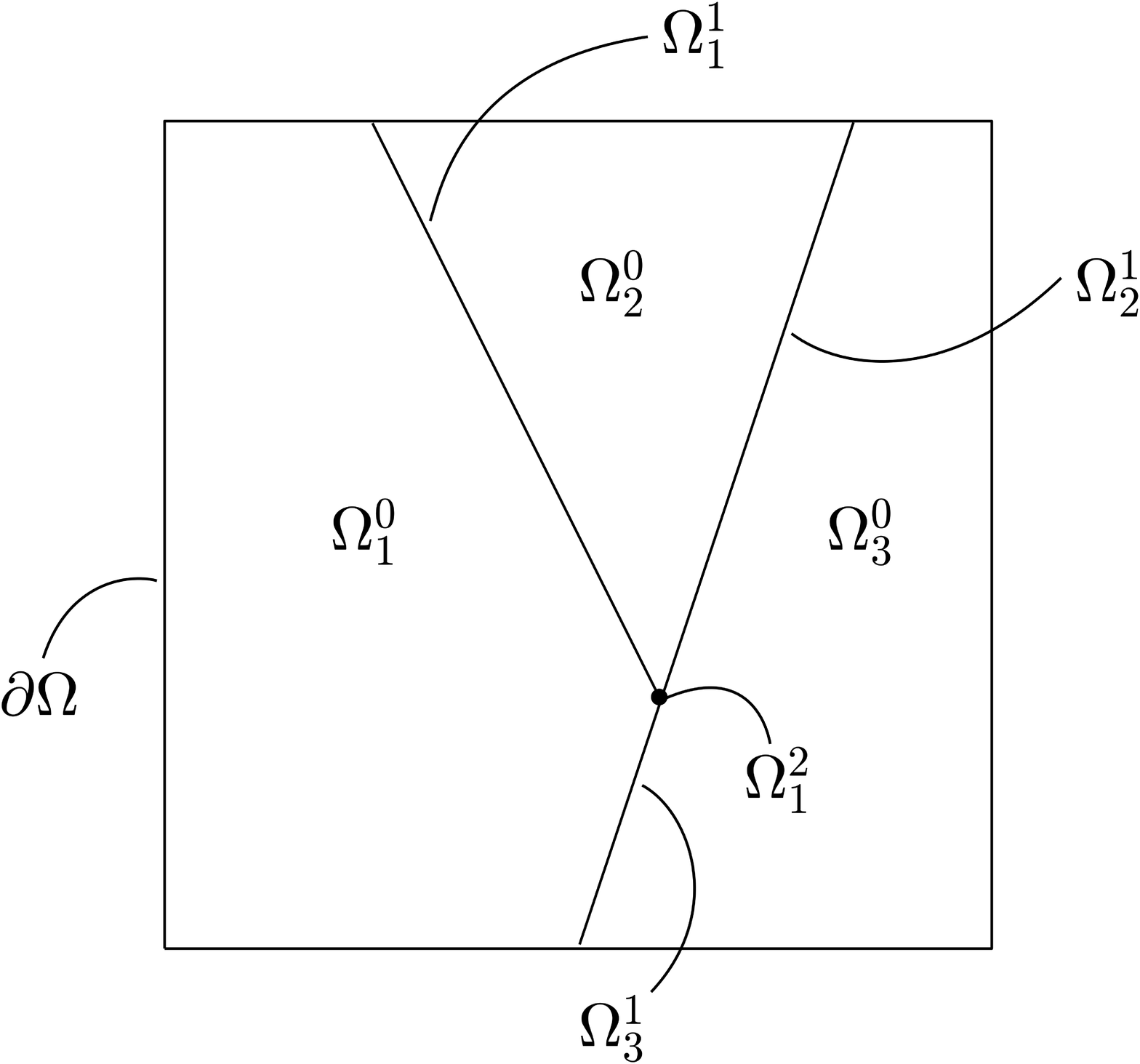}
	\caption{An example of a domain $\Omega$ fulfilling our requirements.}\label{fig:domain_rules}
\end{figure}

\subsection{Model problem}
\label{sec:weak_formulation}

Denote the $L^2$-inner product and norm
$(u, v)_\omega = \int_\omega uv\,\dd x$,
$\|v\|_{L^2(\omega)} = (u, v)_\omega^{1/2}$, and let $L^2(\omega)$ and
$H^1(\omega)$ be the usual Sobolev spaces of $L^2$-integrable
functions and weak derivatives. If $\omega$ is a subdomain segment of
codimension greater than 0, then we interpret $H^1(\omega)$ as $H^1(\widehat \omega)$ after a
rigid coordinate transformation of $\omega$ to a subset
$\widehat \omega \subset \R^{d-c}$ where the gradient operator,
Poincaré inequalites, Green's theorem, and trace theorems in this
lower dimensional space $\R^{d-c}$ can be applied. For clarity, the
symbol $\nabla_\tau$ is used to denote the gradient operator on
subdomain segments of codimension 1.

For functions in the bulk of the domain, we define
$$V^0 = H^1(\Omega^0) = \prod_{i\in I} H^1(\Omega^0_i).$$ 
Occasionally, we refer to components $v^0_i$ of $v^0 \in V^0$ defined as
the restriction of $v^0$ to $\Omega^0_i$. Note that the domain
$\Omega^0$ consists of the disconnected union of $\Omega^0_i$ for
$i \in I$ and no continuity between the subdomain segments is
assumed. Further,
$V^0_0 = \{ v \in V^0 \,:\, v|_{\partial \Omega} = 0 \}$ is the space
of functions satisfying homogeneous Dirichlet boundary
conditions.

For the functions in codimension 1, we define $V^1$ as follows. The
functions in $V^1$ are strongly enforced to be continuous over common
domains of codimension 2. Let
$V^1_{\text b} = H^1(\Omega^1) = \prod_{j\in J} H^1(\Omega^1_j)$ be a
``broken'' space and $v^1 = (v^1_j)_{j\in J} \in V^1_{\text b}$ be any
function with each $v^1_j$ being the restriction of $v^1$ to
$\Omega^1_j$. Then we define the constrained space
\begin{equation}
  V^1 = \{ v^1 \in V^1_{\text b} \,:\, v^1_j|_{\Omega^2_k} - v^1_{j'}|_{\Omega^2_k} = 0 \text{ for all pairs } (j, k), (j', k) \in {\E_1}\}.
\end{equation}
We note that $V^1$ is a Hilbert space, since it
is the kernel of the bounded and linear trace operator on the Hilbert
space $V^1_{\text b}$. Finally, we define
$V^1_0 = \{ v \in V^1 \,:\, v|_{\partial \Omega} = 0 \}$, and $V = V^0 \times V^1$ and $V_0 = V^0_0 \times V^1_0$ with norm $$\|v\|_V = \left(\|v^0\|_{H^1(\Omega^0)}^2 + \|v^1\|_{H^1(\Omega^1)}^2\right)^{1/2}.$$

In the bulk domains $i \in I$, let $A_i \in L^{\infty}(\Omega^0_i)$
with bounds
$0 < \underline{\alpha} \le A_i \le \overline{\alpha} < \infty$.
In the interfaces $j \in J$, let
$A_j \in L^{\infty}(\Omega^1_j)$ with bounds
$\underline{\alpha} \le A_j \le \overline{\alpha}$, and
$B_j \in L^{\infty}(\Omega^1_j)$ with bounds
$0 < \underline{\beta} \le B_j \le \overline{\beta} < \infty$.
With the definition of the symmetric
bilinear form $a$
\begin{equation*}
  \begin{aligned}
    a(v, w) =& \sum_{i \in I} \scalar{A_i \nabla v^0_i}{\nabla w^0_i}_{\Omega^0_i} + \sum_{j \in J} \scalar{A_j \nabla_\tau v^1_j}{\nabla_\tau w^1_j}_{\Omega^1_j} \\
    &+ \sum_{(i, j) \in \E_0} \scalar{B_j (v^0_i - v^1_j)}{w^0_i - w^1_j}_{\Omega^1_j}, \\
  \end{aligned}
\end{equation*}
and a right hand side functional $F \in V_0^*$ (in the dual space of
$V_0$), the model problem can be stated as to find $u \in V_0$ such
that for all $v \in V_0$,
\begin{equation}
  \label{eq:weak_problem}
  \begin{aligned}
    a(u,v) = F(v).
  \end{aligned}
\end{equation}
The bulk and the interfaces are coupled by a Robin-type boundary
condition, continuity of the solution between the interface segments
is enforced, and a Kirchoff's law type equation for the flux between
interface segments applies. Non-homogeneous Dirichlet boundary
conditions defined on $\partial \Omega$ can be handled by picking any
function $g \in V$ satisfying the boundary conditions, setting
$F(v) = a(-g, v)$, solving \eqref{eq:weak_problem}, and obtaining the
solution as $u + g$.

\begin{remark}[Bulk-interface coupling]
  For the bulk-interface coupling, we use boundary conditions III as
  presented in \cite{MaJaRo05} in a porous media flow setting. The
  interface parameters $A_j$ and $B_j$ translate to tangential and
  normal interface permeabilities $K^1_\tau$ and $K^1_n$, and thickness
  $t$ of the interface as follows: $A_j = K^1_\tau t$ and
  $B_j = K^1_n/t$.
\end{remark}

If the solution is sufficiently smooth, and if
$f_i \in L^2(\Omega^0_i)$, $f_j \in L^2(\Omega^1_j)$, and
\begin{equation*}
  F(w) = \sum_{i \in I} \scalar{f_i}{w^0_i}_{\Omega^0_i} + \sum_{j \in J} \scalar{f_j}{w^1_j}_{\Omega^1_j},
\end{equation*}
then the weak formulation in \eqref{eq:weak_problem} corresponds to the
strong problem of finding $u \in V_0$ such that
\begin{equation*}
  \begin{aligned}
    - \div A_i \nabla u^0_i &= f_i && \text{ in } \Omega^0_i, \\
    - \div_{\tau} A_j \nabla_{\tau} u^1_j - \sum_{i \,:\, (i,j) \in E_0} B_j(u^0_i - u^1_j) &= f_j&& \text{ in } \Omega^1_j, \\
  \end{aligned}
\end{equation*}
with boundary conditions
\begin{equation*}
  \begin{aligned}
    n_{\Omega^0_i} \cdot A_i \nabla u^0_i + B_j u^0_i & = B_j u^1_{j} && \text{ on } \partial \Omega^0_i \cap \Omega^1_j, \\
    u^1_j & = u^1_{j'} && \text{ on } \partial \Omega^1_j \cap \partial \Omega^1_{j'},\\
    \sum_{\{j\,:\,(j, k) \in \E_1\}} n_{\Omega^1_{j}} \cdot A_{j} \nabla_{\tau} u^1_{j} & = 0 && \text{ on } \partial \Omega^1_{j} \cap \Omega^2_k.
 \end{aligned}
\end{equation*}
Here $n_{\omega}$ is the outward normal defined on the boundary of the subdomains $\omega$.

\subsection{Well-posedness}
\label{sec:well_posedness}

To prove existence and uniqueness of solution for the problem
\eqref{eq:weak_problem} we prove that $a$ is coercive and bounded
on $V_0$ and apply the Lax--Milgram theorem. Coercivity cannot be shown
directly by a Poincaré or Friedrichs inequality since the subdomain
segments are disconnected and not all of them necessarily intersect
the domain boundary. We establish coercivity by an iterative
procedure, starting to bound norms over subdomain segments far from
the domain boundary in norms over subdomain segments and proceed until
we reach subdomain segments for which a Friedrichs inequality can be
used.

\begin{lemma}
  \label{lem:coercivity}
  The bilinear form $a$ is coercive on $V_0$, i.e.\ there is a $C > 0$ such that for all $v \in V_0$,
  \begin{equation*}
    a(v,v) \ge C (\underline \alpha^{-1} + \underline \beta^{-1})^{-1} \|v\|_V^2,
  \end{equation*}
  and $C$ depends only on the geometry of the subdomain segments $\{\Omega^c_\ell\}_{c, \ell}$.
\end{lemma}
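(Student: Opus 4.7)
My plan is to first separate out the dependence on $\underline\alpha$ and $\underline\beta$ and then to establish a purely geometric inequality. From the lower bounds on $A_i$, $A_j$ and $B_j$ one has $a(v,v) \ge \underline\alpha\, G(v) + \underline\beta\, R(v)$, where
$$G(v) = \sum_{i \in I} \|\nabla v^0_i\|^2_{L^2(\Omega^0_i)} + \sum_{j \in J} \|\nabla_\tau v^1_j\|^2_{L^2(\Omega^1_j)}, \qquad R(v) = \sum_{(i,j) \in \E_0} \|v^0_i - v^1_j\|^2_{L^2(\Omega^1_j)}.$$
Writing $\|v\|_V^2 = G(v) + L(v)$ with $L(v) = \sum_{i \in I} \|v^0_i\|^2_{L^2(\Omega^0_i)} + \sum_{j \in J} \|v^1_j\|^2_{L^2(\Omega^1_j)}$, it is enough to prove a purely geometric bound of the form $L(v) \le C_{\mathrm{geom}}\,(G(v) + R(v))$ for all $v \in V_0$: the elementary inequality $G(v) + R(v) \le (\underline\alpha^{-1} + \underline\beta^{-1})(\underline\alpha G(v) + \underline\beta R(v))$ then gives $\|v\|_V^2 \le (1 + C_{\mathrm{geom}})(\underline\alpha^{-1} + \underline\beta^{-1})\, a(v,v)$, which rearranges to the claim.

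To prove the geometric bound I would induct over an adjacency graph with nodes $\{\Omega^0_i\}_{i \in I} \cup \{\Omega^1_j\}_{j \in J}$ and two kinds of edges: a bulk--interface edge whenever $(i,j) \in \E_0$, and an interface--interface edge between $\Omega^1_j$ and $\Omega^1_{j'}$ whenever they share a codimension-$2$ segment $\Omega^2_k$, i.e.\ $(j,k), (j',k) \in \E_1$. I would call a node \emph{level $0$} if its segment meets $\partial \Omega$ on a set of positive measure, and define the level of any other node to be one plus the minimum level of its graph neighbours. In the base case, a level-$0$ segment has $v$ vanishing on a portion of its boundary, so Friedrichs' inequality bounds its $L^2$ norm by its gradient norm and hence by $G(v)$. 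In the inductive step for a bulk segment $\Omega^0_i$ of positive level I pick a neighbour $\Omega^1_j$ of strictly lower level with $(i,j) \in \E_0$ and apply a Poincaré-with-trace inequality on $\Omega^0_i$ with trace on $\Omega^1_j$,
$$\|v^0_i\|^2_{L^2(\Omega^0_i)} \le C\bigl(\|\nabla v^0_i\|^2_{L^2(\Omega^0_i)} + \|v^0_i\|^2_{L^2(\Omega^1_j)}\bigr),$$
and then split $\|v^0_i\|^2_{L^2(\Omega^1_j)} \le 2\|v^0_i - v^1_j\|^2_{L^2(\Omega^1_j)} + 2\|v^1_j\|^2_{L^2(\Omega^1_j)}$; the first term belongs to $R(v)$ and the second is bounded by the induction hypothesis on $\Omega^1_j$. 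For an interface segment $\Omega^1_j$ of positive level I pick an interface neighbour $\Omega^1_{j'}$ of strictly lower level sharing some $\Omega^2_k$, apply Poincaré-with-trace on $\Omega^1_j$ with trace on $\Omega^2_k$, and invoke the continuity constraint $v^1_j|_{\Omega^2_k} = v^1_{j'}|_{\Omega^2_k}$ together with the $H^1$-trace theorem on $\Omega^1_{j'}$ to bound the trace term by $\|v^1_{j'}\|^2_{H^1(\Omega^1_{j'})}$, which is controlled by $G(v)$ plus the induction hypothesis.

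The main obstacle is to ensure that the level function is everywhere finite, i.e.\ that every connected component of the adjacency graph contains at least one level-$0$ node. This has to be extracted from the geometric setup: connectedness of $\Omega$, the Lipschitz-polytope assumption on the $\Omega^c_\ell$, and the densely-partitioning adjacency structure together force every subdomain segment to be linked, through a finite chain of bulk--interface and interface--interface adjacencies, to a segment that meets $\partial \Omega$. Once termination is verified, the constant $C_{\mathrm{geom}}$ arises as a finite composition of Poincaré-with-trace and trace-theorem constants for the individual segments, together with factor-$2$ contributions from triangle inequalities, and therefore depends only on the shapes and the adjacency pattern of the $\Omega^c_\ell$. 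Combining with the parameter reduction from the first paragraph delivers the stated coercivity estimate.
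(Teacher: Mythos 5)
Your overall strategy is sound and is, at its core, the same as the paper's: reduce coercivity to a purely geometric inequality and prove that inequality by propagating a Friedrichs bound from boundary-touching segments through the bulk--interface adjacency structure, using a Poincar\'e-with-trace inequality on each bulk segment and the Robin jump terms $\|v^0_i - v^1_j\|_{L^2(\Omega^1_j)}$ to pass between levels. The paper organizes this as a single walk in the bipartite graph $(I, J, \E_0)$ that visits every vertex and ends at a boundary-touching bulk segment, accumulating a constant of the form $NC^N$; your breadth-first induction on the distance to the boundary is a cleaner bookkeeping of the same estimates, and your upfront separation of the coefficient dependence via $G(v)+R(v) \le (\underline\alpha^{-1}+\underline\beta^{-1})(\underline\alpha G(v) + \underline\beta R(v))$ is tidier than the paper's practice of inserting $\underline\alpha$, $\underline\beta$ at the very end. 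Both arguments rest on the same unproven geometric assertion, namely that the adjacency graph is connected and contains a segment on which Friedrichs applies.

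There is, however, one genuine gap in your inductive step for interface segments. You only treat the case where $\Omega^1_j$ has an \emph{interface} neighbour $\Omega^1_{j'}$ of strictly lower level through a shared codimension-$2$ segment. By your definition of level, a positive-level interface segment is only guaranteed a lower-level neighbour of \emph{some} kind, and in the generic configuration (including the paper's examples, where interfaces need not meet each other at all) that neighbour is a bulk segment $\Omega^0_i$ with $(i,j)\in\E_0$; your described step is then vacuous and the induction stalls. The missing case is handled exactly by the paper's inequality \eqref{eq:one}: $\|v^1_j\|^2_{L^2(\Omega^1_j)} \le 2\|v^1_j - v^0_i\|^2_{L^2(\Omega^1_j)} + C\|v^0_i\|^2_{H^1(\Omega^0_i)}$, where the first term belongs to $R(v)$ and the second is controlled by the induction hypothesis on the lower-level bulk segment via the trace inequality \eqref{eq:trace}. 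Adding this case closes the induction; your interface--interface step via the continuity constraint on $\Omega^2_k$ is a valid extra mechanism that the paper does not use, but it cannot replace the bulk-to-interface step.
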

\begin{proof}
In the following, $C$ denotes a constant that is independent of
the arbitrary function $v = (v^0, v^1) \in V_0 = V^0_0 \times V^1_0$. The value of the constant is
generally not tracked between inequalities.
Let $i_0 \in I$ be a subdomain segment that intersects the domain
boundary so that there is a Friedrichs inequality
\begin{equation}
  \|v^0_{i_0}\|_{L^2(\Omega^0_{i_0})} \le C |v^0_{i_0}|_{H^1(\Omega^0_{i_0})}.
\end{equation}

We note that $G = (I, J, \E_0)$ defines a bipartite undirected graph,
where the subdomain segments of codimensionality 0 and 1 ($I$ and $J$)
constitute the vertices and the edges $\E_0$ are the adjacency
relations between subdomain segments. It is bipartite because the edges
connect vertices in $I$ only with vertices in $J$ and vice versa.
Since $\Omega$ is connected,
$i_0$ is reachable from all vertices in $G$. Then it is possible to
define a walk through $G$ beginning at an arbitrary $i_N \in I$ and
ending at $i_0$. We express the walk as the sequence of vertices it
visits by $(i_N, j_{N}, i_{N-1}, j_{N-1},\ldots, j_1, i_0)$ and
require that all vertices in $G$ are visited at least once by the
walk. We allow the same vertex to be visited multiple times to
guarantee the existence of such a sequence.

We make use of the following
inequality, proven in e.g.\ \cite[eq.\ (5.3.3)]{BrennerScott}, valid
for any $v^0_i \in H^1(\Omega^0_i)$ and $j$ such that $(i, j) \in E_0$,
\begin{equation}
  \label{eq:brenner_scott}
  \begin{aligned}
    \| v^0_i \|_{L^2(\Omega^0_i)} \leq C \left( \left|\int_{\Omega^1_j} v^0_i\, \dd s\right| + | v^0_i |_{H^1(\Omega^0_i)}\right).
  \end{aligned}
\end{equation}
Further, we note that by Hölder's inequality, we have 
\begin{equation}
\left|\int_{\Omega^1_j} v^0_i\, \dd s\right| \le |\Omega^1_j|^{1/2} \|v^0_i\|_{L^2(\Omega^1_j)},
\end{equation}
which together with \eqref{eq:brenner_scott} gives
\begin{equation}
  \label{eq:inverse_trace}
  \begin{aligned}
    \| v^0_i \|_{L^2(\Omega^0_i)} 
    \leq C \left( \| v^0_i \|_{L^2(\Omega^1_j)} + | v^0_i |_{H^1(\Omega^0_i)}\right).
  \end{aligned}
\end{equation}
The following trace inequality will be used, valid for $v^0_i \in H^1(\Omega^0_i)$ for which $(i, j) \in E_0$,
\begin{equation}
  \label{eq:trace}
  \begin{aligned}
    \| v^0_i \|_{L^2(\Omega^1_j)} \le C \left(\| v^0_i \|_{L^2(\Omega^0_i)} + | v^0_i |_{H^1(\Omega^0_i)}\right).
  \end{aligned}
\end{equation}

For $i \in I$, we have
\begin{equation}
  \label{eq:zero}
  \begin{aligned}
    \| v^0_i \|^2_{H^1(\Omega^0_i)} &= | v^0_i |^2_{H^1(\Omega^0_i)} + \| v^0_i \|^2_{L^2(\Omega^0_i)} \\
    & \eqrel{eq:inverse_trace}{\le} C\left(| v^0_i |^2_{H^1(\Omega^0_i)} + \| v^0_i \|^2_{L^2(\Omega^1_j)}\right) \\
    & \le C\left(| v^0_i |^2_{H^1(\Omega^0_i)} + \| v^0_i - v^1_j \|^2_{L^2(\Omega^1_j)} + \| v^1_j \|^2_{L^2(\Omega^1_j)}\right).
  \end{aligned}
\end{equation}

We also have,
\begin{equation}
  \label{eq:one}
  \begin{aligned}
    \| v^1_j \|^2_{L^2(\Omega^1_j)} & \le 2\| v^1_j - v^0_i \|^2_{L^2(\Omega^1_j)} + 2\| v^0_i \|^2_{L^2(\Omega^1_j)} \\
    & \eqrel{eq:trace}{\le} 2\| v^1_j - v^0_i \|^2_{L^2(\Omega^1_j)} + C\| v^0_i \|^2_{H^1(\Omega^0_i)}.
  \end{aligned}
\end{equation}
Since the sequence $(i_N, j_N, \ldots, i_0)$ contains all elements in
both $I$ and $J$, we can rewrite the $V$-norm of $v$ as follows,
\begin{equation}
  \begin{aligned}
    \|v\|_V^2 & = \sum_{i \in I} \|v^0_i\|_{H^1(\Omega^0_i)}^2 + \sum_{j \in J} \|v^1_j\|_{H^1(\Omega^1_j)}^2  \\
    & \le  \|v^0_{i_N}\|_{H^1(\Omega^0_{i_N})}^2 + \|v^1_{j_N}\|_{H^1(\Omega^1_{j_N})}^2 + \|v^0_{i_{N-1}}\|_{H^1(\Omega^0_{i_{N-1}})}^2 + \cdots + \|v^0_{i_{0}}\|_{H^1(\Omega^0_{i_{0}})}^2\\
    & \eqrel{eq:zero}{\le} C \left( |v^0_{i_N}|_{H^1(\Omega^0_{i_N})}^2 + \|v^0_{i_N} - v^1_{j_N}\|_{L^2(\Omega^1_{j_N})}^2\right) + (C + 1)\|v^1_{j_N}\|_{H^1(\Omega^1_{j_N})}^2 + {} \\
    & \phantom{{}\le{}} \|v^0_{i_{N-1}}\|_{H^1(\Omega^0_{i_{N-1}})}^2 + \cdots + \|v^0_{i_{0}}\|_{H^1(\Omega^0_{i_{0}})}^ 2\\
    & \eqrel{eq:one}{\le} C \left( |v^0_{i_N}|_{H^1(\Omega^0_{i_N})}^2 + \|v^0_{i_N} - v^1_{j_N}\|_{L^2(\Omega^1_{j_N})}^2\right) + (C + 1)|v^1_{j_N}|_{H^1(\Omega^1_{j_{N}})}^2 + {}\\
    & \phantom{{}\le{}} 2(C+1)\|v^1_{j_N} - v^0_{i_{N-1}}\|_{L^2(\Omega^1_{j_N})}^2 + (C(C + 1) + 1)\|v^0_{i_{N-1}}\|_{H^1(\Omega^0_{i_{N-1}})}^2 + {} \\
    & \phantom{{}\le{}} \|v^1_{j_{N-1}}\|_{H^1(\Omega^1_{j_{N-1}})}^2 + \cdots + \|v^0_{i_{0}}\|_{H^1(\Omega^0_{i_{0}})}^2.\\
  \end{aligned}
\end{equation}
This procedure is then iterated, using \eqref{eq:zero} and
\eqref{eq:one}. For the last term
$\|v^0_{i_{0}}\|_{H^1(\Omega^0_{i_{0}})}^2$, we use Friedrichs
inequality to bound it by $C |v^0_{i_{0}}|_{H^1(\Omega^0_{i_0})}$.
The walk visits the vertices and edges in the graph at most $N$ times,
and since we have iterated the steps $N$ times we get
\begin{equation}
  \begin{aligned}
    \label{eq:after_walk}
    \|v\|_V^2 & \le NC^{N}\left(\sum_{i \in I} |v^0_i|_{H^1(\Omega^0_i)}^2 + \sum_{(i, j) \in E_0} \|v^0_i - v^1_j\|_{L^2(\Omega^1_j)}^2 + \sum_{j \in J} |v^1_j|_{H^1(\Omega^1_j)}^2\right),
  \end{aligned}
\end{equation}
where we have added the terms for the edges from $E_0$ that were not
part of the walk. The constant $N$ is the length of the walk and
depends on how the subdomain segments are connected with each other
and is thus also a constant depending on the geometry of the
problem. In the following, we do not track $N$ and let $C$ absorb
it.

Finally, to obtain the coercivity bound, we use that $\underline \alpha \le A_i$, $\underline \alpha \leq A_j$ and $\underline \beta \le B_j$ for all
$i \in I$ and $j \in J$, and obtain from \eqref{eq:after_walk} and the definition of $a$ that
\begin{equation*}
  \begin{aligned}
    \|v\|_V^2 & \le C (\underline \alpha^{-1} + \underline \beta^{-1})a(v, v).
  \end{aligned}
\end{equation*}
\end{proof}

\begin{lemma}
  \label{lem:boundedness}
  The bilinear form $a$ is bounded on $V$, i.e.\ there exists a $C < \infty$ such that for all $v \in V$,
  \begin{equation*}
    a(v, w) \le C (\overline{\alpha} + \overline{\beta}) \| v \|_V \, \| w\|_V,
  \end{equation*}
  and $C$ depends only on the geometry of the subdomain segments $\{\Omega^c_\ell\}_{c, \ell}$.
\end{lemma}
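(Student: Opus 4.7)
The plan is to bound each of the three sums in the definition of $a(v,w)$ separately using Cauchy--Schwarz, the $L^\infty$ bounds on the coefficients, and the trace inequality \eqref{eq:trace} established above.

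First, for the two gradient terms, I would apply Cauchy--Schwarz in $L^2$ together with the bounds $A_i \le \overline\alpha$ and $A_j \le \overline\alpha$ to obtain
\begin{equation*}
\sum_{i \in I} (A_i \nabla v^0_i, \nabla w^0_i)_{\Omega^0_i} + \sum_{j \in J} (A_j \nabla_\tau v^1_j, \nabla_\tau w^1_j)_{\Omega^1_j} \le \overline\alpha \bigl(|v^0|_{H^1(\Omega^0)} |w^0|_{H^1(\Omega^0)} + |v^1|_{H^1(\Omega^1)} |w^1|_{H^1(\Omega^1)}\bigr),
\end{equation*}
and then a further Cauchy--Schwarz in the sum over $i \in I$ and $j \in J$ gives a bound of $\overline\alpha \|v\|_V \|w\|_V$.

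Next, for the coupling term, Cauchy--Schwarz and the bound $B_j \le \overline\beta$ give
\begin{equation*}
\sum_{(i,j) \in \E_0} (B_j (v^0_i - v^1_j), w^0_i - w^1_j)_{\Omega^1_j} \le \overline\beta \sum_{(i,j)\in \E_0} \|v^0_i - v^1_j\|_{L^2(\Omega^1_j)} \|w^0_i - w^1_j\|_{L^2(\Omega^1_j)}.
\end{equation*}
Using the triangle inequality, then the trace inequality \eqref{eq:trace} on the bulk components and the trivial embedding $\|v^1_j\|_{L^2(\Omega^1_j)} \le \|v^1_j\|_{H^1(\Omega^1_j)}$ on the interface components, each factor is bounded by $C(\|v^0_i\|_{H^1(\Omega^0_i)} + \|v^1_j\|_{H^1(\Omega^1_j)})$, and similarly for $w$.

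Finally, I would apply Cauchy--Schwarz over the finite index set $\E_0$ to collect the sum as a product of $V$-norms. The only subtlety is that each $i \in I$ (resp.\ $j \in J$) may appear in several pairs of $\E_0$, but since the adjacency graph is finite and determined by the geometry, the number of repetitions is bounded by a constant depending only on $\{\Omega^c_\ell\}_{c,\ell}$, which gets absorbed into $C$. Combining the three estimates yields
\begin{equation*}
a(v,w) \le C(\overline\alpha + \overline\beta) \|v\|_V \|w\|_V.
\end{equation*}
I do not anticipate a real obstacle here: unlike the coercivity argument, no walk-through-the-graph construction is needed, and the bookkeeping of the geometric constant from the overlapping edges of $\E_0$ is the only point requiring care.
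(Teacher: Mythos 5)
Your proposal is correct and follows essentially the same route as the paper: Cauchy--Schwarz with the $L^\infty$ coefficient bounds for the two gradient sums, and the trace inequality \eqref{eq:trace} plus the triangle inequality for the coupling term, with the finite multiplicity of indices in $\E_0$ absorbed into the geometric constant (the paper phrases this as $C$ depending on the maximum number of interfaces surrounding a subdomain). No substantive differences.
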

\begin{proof}
Studying the first two sums of $a$, we note that
\begin{equation}
  \label{eq:bdd_a}
  \begin{aligned}
    &\sum_{i \in I} \scalar{A_i \nabla v^0_i}{\nabla w^0_i}_{\Omega^0_i} + \sum_{j \in J} \scalar{A_j \nabla_\tau v^1_j}{\nabla_\tau w^1_j}_{\Omega^1_j}  \\
    &\qquad{} \leq \overline{\alpha} \paren{ \| v^0 \|_{H^1(\Omega^0)} \| w^0 \|_{H^1(\Omega^0)} +
      \| v^1\|_{H^1(\Omega^1)} \| w^1 \|_{H^1(\Omega^1)}} \\
    &\qquad{} \leq \overline{\alpha} \paren{ \| v^0 \|_{H^1(\Omega^0)} + \| v^1 \|_{H^1(\Omega^1)}}
      \paren{\| w^0 \|_{H^1(\Omega^0)} + \| w^1 \|_{H^1(\Omega^1)}} \\
    &\qquad{} \leq 2 \overline{\alpha} \, \| v \|_V \, \| w\|_V.
  \end{aligned}
\end{equation}
The trace inequality from equation \eqref{eq:trace} gives us that for $(i, j) \in \E_0$,
\begin{equation*}
  \begin{aligned}
    \|v^0_i - v^1_j\|_{L^2(\Omega^1_j)} & \le \|v^0_i\|_{L^2(\Omega^1_j)} + \|v^1_j\|_{L^2(\Omega^1_j)} \\
     & \le \|v^1_j\|_{L^2(\Omega^1_j)} + C \| v^0_i \|_{H^1(\Omega^0_i)}.
  \end{aligned}
\end{equation*}
Using this, we can conclude that the third sum of $a$ satisfies
\begin{equation}
  \label{eq:bdd_b}
  \begin{aligned}
    & \sum_{(i, j) \in \E_0} \scalar{B_j (v^0_i - v^1_j)}{w^0_i - w^1_j}_{\Omega^1_j} \\
    & \qquad{} \leq \sum_{(i,j) \in E_0} C \overline{\beta}
        \paren{\| v^0_i \|_{H^1(\Omega^0_i)} + \| v^1_j \|_{H^1(\Omega^1_j)}}
        \paren{\| w^0_i \|_{H^1(\Omega^0_i)} + \| w^1_j \|_{H^1(\Omega^1_j)}} \\
    & \qquad{} \leq C \overline{\beta} \, \| v \|_V \, \| w \|_V,
  \end{aligned}
\end{equation}
where the constant $C$ also depends on the maximum number of interfaces surrounding the subdomains.
\end{proof}

\begin{theorem}
Under the above assumptions, equation \eqref{eq:weak_problem} has a unique solution $u\in V_0$.
\end{theorem}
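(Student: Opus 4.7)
The plan is to apply the Lax--Milgram theorem to the variational problem \eqref{eq:weak_problem} on the Hilbert space $V_0$, since the two main hypotheses are already in hand.

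First I would verify that $V_0 = V^0_0 \times V^1_0$ is a Hilbert space when equipped with the norm $\|\cdot\|_V$. The space $V^0_0$ is a closed subspace of $V^0 = \prod_{i \in I} H^1(\Omega^0_i)$ (a product of Hilbert spaces) defined as the kernel of the bounded boundary trace operator, so it is Hilbert. The space $V^1_0$ was already observed in the text to be Hilbert since it is the kernel of a bounded trace operator on $V^1_{\text b}$, intersected with the boundary-trace kernel. Their product $V_0$ with the natural norm is therefore Hilbert.

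Next I would assemble the ingredients. By Lemma~\ref{lem:coercivity}, the bilinear form $a$ satisfies $a(v,v) \ge C(\underline\alpha^{-1} + \underline\beta^{-1})^{-1}\|v\|_V^2$ for all $v \in V_0$, with $C>0$. By Lemma~\ref{lem:boundedness}, $a(v,w) \le C(\overline\alpha + \overline\beta)\|v\|_V \|w\|_V$ for all $v,w \in V$, and in particular on the subspace $V_0$. The right-hand side $F$ belongs to $V_0^*$ by assumption, so it is a bounded linear functional on $V_0$.

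With $V_0$ Hilbert, $a\colon V_0 \times V_0 \to \mathbb{R}$ bilinear, continuous and coercive, and $F \in V_0^*$, the Lax--Milgram theorem directly yields a unique $u \in V_0$ such that $a(u,v) = F(v)$ for all $v \in V_0$. I do not anticipate a genuine obstacle here; the only mild bookkeeping point is confirming that $V_0$ is complete, which reduces to the observation that the defining constraints (the codimension-2 continuity condition and the homogeneous Dirichlet condition on $\partial\Omega$) are closed linear conditions in the product Hilbert space structure.
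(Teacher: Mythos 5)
Your proposal is correct and follows exactly the same route as the paper: invoke Lax--Milgram on the Hilbert space $V_0$ using the coercivity of Lemma~\ref{lem:coercivity}, the boundedness of Lemma~\ref{lem:boundedness}, and the assumption $F \in V_0^*$. The extra care you take in checking that $V_0$ is complete (the constraints being closed linear conditions) is a detail the paper leaves implicit but does not change the argument.
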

\begin{proof}
Since $V_0$ is a Hilbert space,  Lemma  \ref{lem:coercivity} and Lemma \ref{lem:boundedness} give coercivity and boundedness of the bilinear form, and the right hand side $F$ is assumed to be a bounded linear functional on $V_0$, the Lax--Milgram theorem guarantees existence of a unique solution $u\in V_0$.
\end{proof}

\section{Fitted finite element method}
\label{sec:fem}

In this section we introduce a fitted finite element discretization of the mixed dimensional model problem and prove an a priori error bound. 

\subsection{Discretization}


We let $\Tau^1_{h,j}$ be a quasi-uniform and shape-regular partition of $\overline{\Omega^1_j}$, containing closed simplices of diameter no greater than $h$, with $\Tau^1_h = \bigcup_{j \in J} \Tau^1_{h,j}$. Further, we let $\Tau^0_{h,i}$ be a corresponding partition of $\overline{\Omega^0_i}$, with $\Tau^0_h = \bigcup_{i \in I} \Tau^0_{h,i}$ and such that the simplices of $\Tau^1_h$ constitute the sides of some of the simplices in $\Tau^0_h$. That is, if $\Tau^1_h = \{K^1_{\hat{\jmath}}\}_{\hat{\jmath}}$ and $\Tau^0_h = \{K^0_{\hat{\imath}}\}_{\hat{\imath}}$, then for each pair $\hat{\imath}, \hat{\jmath}$, either
\begin{itemize}
    \item $K^0_{\hat{\imath}} \cap K^1_{\hat{\jmath}} = \emptyset$,
    \item $K^0_{\hat{\imath}} \cap K^1_{\hat{\jmath}} = K^1_{\hat{\jmath}}$ or
    \item $K^0_{\hat{\imath}} \cap K^1_{\hat{\jmath}} \in \partial K^1_{\hat{\jmath}}$.
\end{itemize}

Similarly to what we did in Subsection \ref{sec:weak_formulation}, for the functions on the interface partition, $\Tau^1_h$, we define 
\begin{equation*}
\begin{aligned}
    &V^1_{{\rm b},h} = \prod_{j \in J} \{ v^1 \in C(\overline{\Omega^1_j}): v^1 \text{ piecewise linear functions on }\Tau^1_{h,j} v^1 = 0 \text{ on } \partial \Omega\}, \text{ and} \\
    &V^1_h = \{v^1 \in V^1_{{\rm b},h}: v^1_j |_{\Omega^2_k} - v^1_{j'} |_{\Omega^2_k} = 0 \text{ for all pairs } (j,k), (j',k) \in \E_1\}.
\end{aligned}
\end{equation*}
For the functions on the bulk partition, $\Tau^0_h$, we define
\begin{equation*}
    V^0_h = \prod_{i \in I} \{ v^0 \in C(\overline{\Omega^0_i}): v^0 \text{ piecewise linear functions on }\Tau^0_{h,i}, v^0 = 0 \text{ on } \partial \Omega\},
\end{equation*}
and use these definitions to define $V_h = V^0_h \times V^1_h$. Then the finite element problem reads: find $u_h \in V_h$ such that
\begin{equation}
    a(u_h, v_h) = F(v_h) \qquad \text{ for all } v_h \in V_h. \label{FE-problem}
\end{equation}

We note that $V^0_h \subset V^0$ and $V^1_h \subset V^1$ are also Hilbert spaces and the argumentation in Section~\ref{sec:well_posedness} holds for the discretized equation as well. Thus, Lax--Milgram guarantees the existence and uniqueness of a solution to \eqref{FE-problem}.



\subsection{Interpolation estimates}

In this section, we introduce an interpolation operator
$\I_h : V_0 \to V_h$ defined componentwise $\I_h(v^0, v^1) = (\I^0_h v^0, \I^1_h v^1)$,
where $\I^0_h$ and $\I^1_h$ are Scott--Zhang \cite{ScottZhang} interpolation operators 
onto $V^0_h$ and $V^1_h$, respectively.

For each degree of freedom $k$ (enumerating the Lagrange basis functions
$\{\varphi^0_k\}$ of $V^0_h$), we choose an edge (or face if $d=3$, etc.)\ $\tilde K_k$ in the support of $\varphi^0_k$ such that the mesh vertex associated with $k$ is contained in $\tilde K_k$. Let $\psi^0_k$ be the $L^2(\tilde{K}_k)$-dual basis, i.e.\
\begin{equation*}
  \int_{\tilde{K}_k} \varphi^0_{k'} \psi^0_{k} \dif x = \begin{cases}1 & \text{ if } k=k', \\ 0 & \text{ otherwise,}\end{cases}
\end{equation*}
and define
$$\I^0_h v^0 = \sum_{k} \left(\int_{\tilde{K}_k} v^0 \psi^0_{k} \dif
  x\right) \varphi^0_k.$$ The interface interpolation operator $\I^1_h$
is defined analogously.

In the $V$-norm, we obtain the interpolation error estimate, for $v \in V_0$,
\begin{equation}
  \norm{v - \I_h v}_V 
  \leq Ch\paren{\norm{D^{2} v^0}_{L^2(\Omega^0)} + \norm{D^{2} v^1}_{L^2(\Omega^1)}}.
  \label{eq:interpolation}
  \end{equation}
For proofs and further details on this interpolation, we refer to \cites[Section 4.8]{BrennerScott}{ScottZhang}.

\subsection{A priori error estimate}
In order to derive an a priori estimate, we use coercivity and boundedness (Lemma~\ref{lem:coercivity} and \ref{lem:boundedness}) together with Galerkin orthogonality. For $v \in V_h$, it holds that
\begin{equation*}
	\begin{aligned}
		\frac{1}{C(\underline{\alpha}^{-1} + \underline{\beta}^{-1})} \norm{u - u_h}_V^2 &
		\leq a(u - u_h, u - u_h) = a(u - u_h, u - v) \\
		&\leq \tilde{C} \paren{\overline{\alpha} + \overline{\beta}} \, \| u - u_h \|_V \, \| u - v \|_V.
	\end{aligned}
\end{equation*}
Thus we obtain
\begin{equation}
	\norm{u - u_h}_V \leq C 
	\paren{ \underline{\alpha}^{-1} + \underline{\beta}^{-1} }
	\paren{ \overline{\alpha} + \overline{\beta}}
	 \norm{u - v}_{V}.
	 \label{eq:Cea}
\end{equation}
By choosing $v = \I_h u$, and combining \eqref{eq:interpolation} and \eqref{eq:Cea}, we can thus state the following theorem.
\begin{theorem}{(A priori error estimate.)}
Let $u_h \in V_h$ and $u \in V$ be the solutions of \eqref{FE-problem} and \eqref{eq:weak_problem}. Then
\begin{equation*}
\begin{aligned}
	\norm{u - u_h}_V
	& \leq C h \paren{ \underline{\alpha}^{-1} + \underline{\beta}^{-1} }
	\paren{ \overline{\alpha} + \overline{\beta}}
	\paren{\norm{D^{2} u^0}_{L^2(\Omega^0)} + \norm{D^{2} u^1}_{L^2(\Omega^1)}}.
\end{aligned}\end{equation*}
\label{thm:apriori}
\end{theorem}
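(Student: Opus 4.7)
The plan is to combine the quasi-optimality bound \eqref{eq:Cea} that precedes the theorem statement with the Scott--Zhang interpolation estimate \eqref{eq:interpolation}. All the heavy lifting is already done: Lemma~\ref{lem:coercivity} supplies coercivity with constant proportional to $(\underline\alpha^{-1}+\underline\beta^{-1})^{-1}$, Lemma~\ref{lem:boundedness} supplies boundedness with constant proportional to $(\overline\alpha+\overline\beta)$, and \eqref{eq:interpolation} gives the $O(h)$ rate in the $V$-norm in terms of the second derivatives of $u^0$ and $u^1$.

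First, I would record Galerkin orthogonality: since $V_h \subset V_0$, subtracting \eqref{FE-problem} from \eqref{eq:weak_problem} tested against $v_h \in V_h$ gives $a(u - u_h, v_h) = 0$ for all $v_h \in V_h$. Then, for an arbitrary $v \in V_h$, I would apply coercivity to the error $u - u_h \in V_0$, use Galerkin orthogonality to rewrite $a(u - u_h, u - u_h) = a(u - u_h, u - v)$, and bound the right-hand side by boundedness, dividing through by $\|u - u_h\|_V$ to obtain \eqref{eq:Cea}. Choosing $v = \I_h u$ and invoking \eqref{eq:interpolation} then delivers the bound claimed in the theorem in a single additional line of algebra, with the stated dependence on $\underline\alpha$, $\underline\beta$, $\overline\alpha$, $\overline\beta$, and $h$.

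The only real subtlety, and the main obstacle in the plan, is checking that $\I_h u$ is actually an admissible choice for $v$, i.e.\ that $\I_h u \in V_h$. This requires, first, that the Scott--Zhang interpolants preserve the homogeneous Dirichlet condition on $\partial \Omega$, which is standard and follows by choosing boundary facets as the dual-basis facets $\tilde K_k$ for degrees of freedom on $\partial\Omega$. Second, and more particular to the mixed dimensional setting here, $\I^1_h u^1$ must satisfy the continuity constraint across the codimension-2 subdomain segments $\Omega^2_k$ that is built into the definition of $V^1_h$. This can be ensured by selecting the facets $\tilde K_k$ for degrees of freedom on a shared $\Omega^2_k$ to lie within $\Omega^2_k$ itself, so that the resulting interpolated values from neighboring interface segments coincide on $\Omega^2_k$. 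With these choices, $\I_h u \in V_h$ and the argument closes.
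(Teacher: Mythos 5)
Your proposal is correct and follows essentially the same route as the paper: Galerkin orthogonality combined with the coercivity and boundedness constants from Lemma~\ref{lem:coercivity} and Lemma~\ref{lem:boundedness} yields the quasi-optimality bound \eqref{eq:Cea}, and choosing $v = \I_h u$ together with \eqref{eq:interpolation} gives the stated estimate. Your additional check that $\I_h u$ is admissible in $V_h$ (preservation of the boundary condition and of the continuity constraint across the $\Omega^2_k$ by a suitable choice of the dual-basis facets $\tilde K_k$) is a point the paper leaves implicit, and it is a worthwhile refinement rather than a deviation.
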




\section{Iterative solution based on subspace decomposition}\label{sec:iterative}

Since the bulk domains are only connected through the interfaces it is natural to consider a Schur complement formulation. We propose an iterative solver of the Schur complement equation that is based on subspace decomposition. The subspaces are introduced using an artificial coarse mesh on top the computational domain, see Figure \ref{fig:implementation_domain2} for an illustration and \cite{KY16,GoHeMa22} for more details. Given the subdomains we use an additive Schwarz preconditioner to solve for the Schur complement. Given the solution on the interfaces we solve for the decoupled bulk regions, using a direct solver in parallel.

\subsection{Schur complement}

We let $\{\varphi^0_{k}\}$ and $\{\varphi^1_{\ell}\}$ be the standard Lagrange bases of $V^0_h$ and $V^1_h$ respectively. Thus the finite element solution $u_h \in V_h$ consists of the two parts $u^0_h = \sum_{k} U^0_{k}  \varphi^0_{k} $ and $u^1_h = \sum_{\ell} U^1_{\ell} \varphi^1_{\ell}$. We denote by $U_0$ and $U_1$ the vectors $(U^0_k)_k$ and $(U^1_\ell)_\ell$, respectively.

Let $A$ and $b$ be the resulting matrix and load vector from \eqref{FE-problem}, using the above mentioned basis. Then the system $AU = b$ can be divided into
\begin{equation}
  \label{eq:matrix}
	\left[
	\begin{array}{c | c}
		A_{00} & A_{01} \\ \hline
		A_{10} & A_{11}
	\end{array}
	\right]
	\begin{bmatrix}
		U_0 \\ U_1
	\end{bmatrix} = 
	\begin{bmatrix}
		b_0 \\ b_1
	\end{bmatrix},
\end{equation}
where $A_{00}$ and $A_{11}$ correspond to the degrees of freedom in the bulk and interfaces, respectively. The submatrices $A_{01}$ and $A_{10} = A_{01}^\top$ describe the connections between the interfaces and bulk regions. We form the Schur complement of the submatrix $A_{00}$ and note that $U_1$ solves:
\begin{equation}\label{eq:schur}
\tilde A_{11}U_1:=\big(A_{11} - A_{10}A_{00}^{-1}A_{01}\big) U_1 = b_1 - A_{10}A_{00}^{-1} b_0=:\tilde b_1.
\end{equation}
The submatrix $A_{00}$ is block diagonal which means that linear systems involving this matrix can be solved cheaply in parallel. We therefore focus on how to solve the Schur complement equation (\ref{eq:schur}) efficiently.




\subsection{Preconditioner based on subspace decomposition}\label{subsection:subspace_decomp}
The goal is to define a preconditioner for equation (\ref{eq:schur}). Inspired by \cite{GoHeMa22, KY16}, we introduce an artifical quasi-uniform coarse ($H>h$) finite element mesh $\Tau_H$ of the computational domain with corresponding finite element space $W_H$, see Figure \ref{fig:implementation_domain2}. We let $\{\phi_j\}_{j=1}^n$ be the set of Lagrangian basis functions spanning $W_H$. The mesh $\Tau_H$ is independent of the meshes $\Tau^0_h$ and $\Tau^1_h$ and thereby independent of the location of the interfaces. Next, we let $\mathcal{I}_h^\text{nodal}$ be the nodal interpolant onto $V^1_h$, which is the finite element space for the interfaces on which the Schur complement equations are posed. We define the coarse space
$$
W_0=\mathcal{I}_h^\text{nodal} W_H\subset V^1_h.
$$
Furthermore, we define for $j = 1, \ldots, n$,
$$
W_j=\{v\in V^1_h:\text{supp}(v)\subset \text{supp}(\phi_j)\}\subset V^1_h.
$$
Thereby we have created a subspace decomposition of $V^1_h$, i.e.\ any $v\in V^1_h$ can be written as a sum $v=\sum_{j=0}^n v_j$ with $v_j\in W_j$.


In the remainder of this section, we use matrix representations
for functions and operators on the interfaces. The functions
$\{\varphi^1_\ell\}_{\ell=1}^m$ are used as basis in $V^1_h$ and the
matrices $Q_j \in \R^{m \times m_j}$ are prolongation
matrices mapping from functions in $W_j$ to $V^1_h$. As basis in $W_j$, for $j = 0$, the
functions $\{\phi_j\}_{j=1}^n$ are used (i.e.\ $m_0 = n$), while for $j \ge 1$, we choose the smallest
subset of $\{\varphi^1_\ell\}_{\ell=1}^m$ spanning $W_j$, whose size we denote by $m_j$.

We now introduce the matrices
$$
T_j = (Q_j^\top \tilde A_{11} Q_j )^{-1} Q_j^\top  \in \R^{m_j \times m}
$$
and form the full preconditioner by the sum,
\begin{equation}\label{eq:pre}
T=\sum_{j=0}^n Q_j T_j \in \R^{m \times m}.
\end{equation}
The matrix $T$ is a preconditioner to \eqref{eq:schur}, $$T\tilde{A}_{11}U_1=T\tilde b_1.$$
Note that in order to compute one application of the preconditioner $T$ we need to solve one global coarse scale problem (for $j = 0$, on the subspace $W_0$) and $n$ independent local problems (for $j \ge 1$, on the subspaces $W_j$).  All these solves are done using a direct solver, which means that the technique we propose is semi-iterative. Under some assumptions of the geometry of the interfaces, it is actually possible to show optimal convergence properties of this preconditioner, in the meaning that the convergence rate is independent of the fine scale mesh size $h$ and the coarse scale mesh size $H$. 

\subsection{Convergence of the preconditioned conjugate gradient (PCG) method}

This preconditioner has been thoroughly analyzed for spatial network problems in \cite{GoHeMa22}. In order to apply the convergence analysis presented there we need to formulate our Schur complement equation as an equation posed on a graph. We limit ourselves to the case $d=2$, so that the interfaces are one-dimensional. We let $\mathcal{N}$ be the set of nodes in the interface finite element mesh $\Tau^1_h$ and let $\mathcal{E}$ be the set of edges connecting two nodes, where an edge connecting two nodes $x$ and $y$ is represented by an unordered pair $\{x, y\}$. This forms the graph $\mathcal{G}=(\mathcal{N},\mathcal{E})$. In \cite{GoHeMa22}, two operators are used in the analysis: a weighted graph Laplacian $L$ and a diagonal mass matrix $M$. They are both symmetric and thereby uniquely defined by their corresponding quadratic forms:
\begin{equation*}
  \begin{aligned}
    (L v_1,v_1)&=\frac{1}{2}\sum_{x\in \mathcal{N}}\sum_{\{x,y\}\in \mathcal{E}}\frac{(v_1(x)-v_1(y))^2}{|x-y|},\\
    (M v_1,v_1)&=\frac{1}{2}\sum_{x\in \mathcal{N}}\sum_{\{x,y\}\in \mathcal{E}}v_1(x)^2 |x-y|,
  \end{aligned}
\end{equation*}
for all $v_1 \in V^1_h$, using the notation $(v,w)=\sum_{x\in \mathcal{N}}v(x)w(x)$. In the following, we overload the notation for $L$, $M$, $v \in V_h$ and $v_1 \in V^1_h$ with their matrix representations compatible with \eqref{eq:matrix}.
\begin{theorem}\label{thm:iterative}
If $d = 2$ and the Poincar\'{e} type inequality $v_1^\top M v_1 \leq D v_1^\top L v_1$ holds for all $v_1\in \mathbb{R}^m$ and some $D < \infty$, then 
there exist constants $c_1,c_2>0$ such that
$$
c_1 v_1^\top L v_1 \leq v_1^\top \tilde A_{11}v_1\leq c_2 v_1^\top L v_1.
$$
\end{theorem}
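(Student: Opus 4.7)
The key observation is that the Schur complement has the energy-minimization characterization
\begin{equation*}
  v_1^\top \tilde A_{11} v_1 = \min_{v_0 \in \mathbb{R}^{m_0}} \begin{pmatrix} v_0 \\ v_1 \end{pmatrix}^\top A \begin{pmatrix} v_0 \\ v_1 \end{pmatrix} = \min_{v_0 \in V^0_h} a\bigl((v_0, v_1), (v_0, v_1)\bigr),
\end{equation*}
which decouples the two bounds. Before starting, I would record two equivalences that hold specifically because $d = 2$ and the interface functions are piecewise linears on 1D edges: exact identity
$|v^1_j|^2_{H^1(\Omega^1_j)} = \sum_{\{x,y\}} (v_1(x) - v_1(y))^2/|x-y|$
on each edge, and the trapezoidal-type equivalence
$\|v^1_j\|^2_{L^2(\Omega^1_j)} \simeq \sum_{\{x,y\}} |x-y|\,(v_1(x)^2 + v_1(y)^2)$,
with constants depending only on the mesh regularity. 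Summed over all interface edges, these identify the interface seminorm with $v_1^\top L v_1$ and the interface $L^2$-norm with $v_1^\top M v_1$, up to the coefficient bounds $\underline\alpha, \overline\alpha$.

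For the \emph{lower bound} I would drop the bulk Dirichlet energy and the Robin coupling, both of which are nonnegative, to obtain
\begin{equation*}
  v_1^\top \tilde A_{11} v_1 \ge \sum_{j \in J} (A_j \nabla_\tau v^1_j, \nabla_\tau v^1_j)_{\Omega^1_j} \ge \underline\alpha\, |v^1|^2_{H^1(\Omega^1)} \ge c_1\, v_1^\top L v_1,
\end{equation*}
with $c_1$ proportional to $\underline\alpha$. This step does not use the Poincar\'e hypothesis at all.

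For the \emph{upper bound} I would choose the suboptimal $v_0 = 0$ in the minimization to obtain
\begin{equation*}
  v_1^\top \tilde A_{11} v_1 \le a\bigl((0,v_1),(0,v_1)\bigr) = \sum_{j \in J}(A_j \nabla_\tau v^1_j, \nabla_\tau v^1_j)_{\Omega^1_j} + \sum_{(i,j) \in \E_0} (B_j v^1_j, v^1_j)_{\Omega^1_j}.
\end{equation*}
The first sum is controlled by $\overline\alpha\, v_1^\top L v_1$. In the second sum, each interface $j$ appears at most $\#\{i : (i,j) \in \E_0\}$ times, a number bounded by a geometric constant; combined with the $L^2$--$M$ equivalence this gives a bound of the form $C \overline\beta\, v_1^\top M v_1$. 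Applying the assumed Poincar\'e-type inequality $v_1^\top M v_1 \le D\, v_1^\top L v_1$ then yields
\begin{equation*}
  v_1^\top \tilde A_{11} v_1 \le c_2\, v_1^\top L v_1, \qquad c_2 \lesssim \overline\alpha + D\overline\beta.
\end{equation*}

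The routine part is keeping track of the equivalence constants; the only conceptually delicate point is exactly where the Poincar\'e hypothesis $v_1^\top M v_1 \le D v_1^\top L v_1$ enters, namely in converting the Robin coupling term (which naturally lives in the $L^2$-norm on the interfaces) into a quantity comparable to the graph-Laplacian form $v_1^\top L v_1$. This is the only step that uses the assumption, and it is what ties the bound to the global connectivity of the interface network rather than to a per-edge estimate.
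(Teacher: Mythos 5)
Your argument is correct, and for the upper bound it is essentially identical to the paper's: both exploit that $A_{10}A_{00}^{-1}A_{01}$ is positive semidefinite (your choice $v_0=0$ in the minimization is exactly the statement $v_1^\top \tilde A_{11} v_1 \le v_1^\top A_{11} v_1$), bound $v_1^\top A_{11}v_1$ by $\overline{\alpha}\, v_1^\top L v_1 + C\overline{\beta}\, v_1^\top M v_1$, and invoke the Poincar\'e hypothesis only at that point to absorb the mass term. For the lower bound you take a genuinely different and in fact cleaner route. The paper also starts from the identity $v_1^\top \tilde A_{11} v_1 = v^\top A v$ with $v = [-A_{00}^{-1}A_{01}v_1, v_1]^\top$ (the minimizer in your variational characterization), but then applies the global coercivity estimate of Lemma~\ref{lem:coercivity} to this $v$, obtaining $v^\top A v \ge c_1 \|v\|_V^2 \ge c_1 v_1^\top L v_1$ with $c_1 = 1/(C(\underline{\alpha}^{-1}+\underline{\beta}^{-1}))$, where $C$ carries the geometric constant from the graph-walk/Friedrichs argument. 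You instead observe that all three sums in $a(v,v)$ are nonnegative and keep only the interface gradient term, which is independent of the bulk extension; this bypasses Lemma~\ref{lem:coercivity} entirely and yields the sharper and simpler constant $c_1 = \underline{\alpha}$, with no dependence on $\underline{\beta}$ or on the connectivity of the subdomain graph. Both arguments are valid; yours buys a better constant and a more elementary proof of the lower bound, while the paper's version reuses machinery it has already established. Your preliminary identifications of $|v^1|^2_{H^1(\Omega^1)}$ with $v_1^\top L v_1$ (exact for piecewise linears on one-dimensional edges) and of $\|v^1\|^2_{L^2(\Omega^1)}$ with $v_1^\top M v_1$ (mass-lumping equivalence), together with the bounded multiplicity of bulk neighbours per interface, are exactly what the paper compresses into the phrase ``from the construction of $A_{11}$,'' so no gap there.
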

\begin{proof}
  We start with the second inequality. From the construction of $A_{11}$ it follows that
$$
\underline\alpha v_1^\top L v_1 \leq v_1^\top A_{11}v_1\leq \overline\alpha v_1^\top L v_1+  \overline\beta v_1^\top M v_1.
$$
Since $A_{00}$ is symmetric and positive definite, $v_0^\top A_{00}^{-1}v_0>0$ and therefore \\$v_1^\top A^\top_{01}A_{00}^{-1}A_{01} v_1\geq 0$.
We conclude
$$
v_1^\top \tilde A_{11} v_1\leq \overline\alpha v_1^\top L v_1+  \overline\beta v_1^\top M v_1\leq (\overline\alpha+D\overline\beta) v_1^\top L v_1=: c_2v_1^\top L v_1.
$$

To prove the first inequality, we use that for $v \in V_h$,
$$v^\top A v  \geq \frac{1}{C(\underline{\alpha}^{-1} + \underline{\beta}^{-1})}\norm{v}_V^2=:c_1 \norm{v}_V^2.$$ Specifically, it holds for $v = [-A_{00}^{-1}A_{01}v_1, v_1]^\top$. Thus,
$$
v_1^\top (A_{11} - A_{10}A_{00}^{-1}A_{01})v_1  = v^\top A v \geq
	c_1 \norm{v}_V^2 \geq 
	c_1 v^\top_1 L v_1.
$$

\end{proof}

The assumption $v_1^\top M v_1 \leq D v_1^\top L v_1$ is fulfilled under assumptions of locality, homogeneity, and connectivity of the graph $\mathcal{G}$ on a scale $R_0$.  Locality means that all the edges of the network are shorter than $R_0$, homogeneity means that the variations in total length of the subnetwork contained on a square with side length greater than or equal to $2R_0$, is bounded by a constant, and connectivity means that for any subnetwork contained in a square of size $R\geq R_0$, there is a connected subnetwork contained in an extended square of side length $2R+2R_0$. The precise assumptions can be found in Assumption 3.4 in \cite{GoHeMa22}. In short, the theory is applicable if the interfaces are connected and sufficiently dense in the computational domain. Note that the preconditioner is still applicable and may give rapid convergence even if these assumptions are not fulfilled. 

Under Assumption 3.4 in \cite{GoHeMa22}, Theorem \ref{thm:iterative} holds. Moreover, with
 $H\geq R_0$, Theorem 4.3 in \cite{GoHeMa22} guarantees convergence of the conjugate gradient method for solving $\tilde A_{11}U_1=\tilde b_1$ with preconditioner $T$, defined by equation (\ref{eq:pre}). We let $U_1^{(\ell)}$ denote the approximate solution after $\ell$ iteration and let $|v|^2_{\tilde{A}_{11}}=v^T\tilde{A}_{11}v$.
Then the following error bound holds, 
 $$
|U_1-U_1^{(\ell)}|_{\tilde{A}_{11}}\leq 2\left(\frac{\sqrt{\kappa}-1}{\sqrt{\kappa}+1}\right)^{\ell}|U_1-U_1^{(0)}|_{\tilde{A}_{11}}, 
$$
where $\kappa$ is the condition number of $T\tilde{A}_{11}$.
Expressed in the $V^1$-norm, we  have
$$
\|u^1_h-u^{1,{(\ell)}}_h\|_{V^1} \leq C\left(\frac{\sqrt{\kappa}-1}{\sqrt{\kappa}+1}\right)^{\ell}\|u^1_h-u^{1,{(0)}}_h\|_{V^1},
$$
where $u^{1,{(\ell)}}_h \in V^1_h$ is the function corresponding to the vector $U_1^{(\ell)}$. We note that $C$ only depends on upper and lower bounds of the coefficients in equation (\ref{eq:weak_problem}), and have from \cite{GoHeMa22} that $\kappa$ is independent of the mesh sizes $h$ and
$H$. Once the interface component $U_1^{(\ell)}$ has been computed,
the bulk component $U_0^{(\ell)}$ can be solved from
$$A_{00} U_0^{(\ell)} = b_0 - A_{01} U_1^{(\ell)}.$$ This problem is
easy to solve since $A_{00}$ is block diagonal and each block
corresponds to a problem in a single bulk subdomain segment.

\section{Numerical examples}\label{sec:num_ex}

We start by investigating the convergence theoretically and numerically for the proposed method for some example problems. We then examine the number of iterations needed for the preconditioned conjugate gradient method to converge, for two different test cases. All numerical examples are two dimensional, $d=2$, and posed on the unit square.

\subsection{Regularity and convergence}
\label{sec:num_reg_conv}
The solutions $u^0_i$ on the bulk domains $\Omega_i^0$ to equation \eqref{eq:weak_problem} have $H^{\sfrac{3}{2}}$-regularity. If the bulk domain is convex it has $H^2$-regularity. We formulate this result as a theorem and leave the proof to Appendix \ref{app:regularity}.

\begin{theorem}
Assume $A_i \in C^\infty(\overline{\Omega^0_i})$, $A_j \in C^\infty(\overline{\Omega^1_j})$,
$B_j \in C^\infty(\Omega^1_j)$, $f_i \in L^2(\Omega_i^0)$ and $g_i \in H^{1}(\overline{\Omega_i^0} \cap \partial \Omega)$ for all $i \in I$ and $j \in J$. Then $u_i^0 \in H^{\sfrac{3}{2}}(\Omega_i^0)$ and $u_j^1\in H^2(\Omega_j^1)$. If all subdomains are convex, then additionally $u_i^0 \in H^{2}(\Omega_i^0)$. 
\label{regularity_bulk}
\end{theorem}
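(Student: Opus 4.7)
The plan is to work with the strong formulation derived just after \eqref{eq:weak_problem} and to apply classical elliptic regularity, subdomain by subdomain, in a short bootstrap loop: bulk traces serve as Robin data for the interface problems and, once lifted, interface traces serve as Robin data for the bulk problems.

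\emph{Starting point.} From Lax--Milgram one already has $u\in V_0$ with $u^0_i\in H^1(\Omega^0_i)$ and $u^1_j\in H^1(\Omega^1_j)$. The standard trace theorem on the Lipschitz subdomains $\Omega^0_i$ therefore yields $u^0_i|_{\Omega^1_j}\in H^{1/2}(\Omega^1_j)\subset L^2(\Omega^1_j)$.

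\emph{Step 1 (interface).} On each $\Omega^1_j$ the strong form reads
\[
-\div_\tau(A_j\nabla_\tau u^1_j)
= f_j + \sum_{i:\,(i,j)\in \E_0} B_j\,(u^0_i - u^1_j),
\]
whose right-hand side lies in $L^2(\Omega^1_j)$ by the previous line, while the codim-$2$ coupling and the boundary condition on $\partial\Omega$ translate into Dirichlet-type conditions on $\partial \Omega^1_j$ (a common value is enforced at each junction by continuity in $V^1$, and homogeneous data on $\partial\Omega$ by the choice of $V^1_0$). In the two-dimensional setting used in Section~\ref{sec:num_ex} each $\Omega^1_j$ is an interval, and the $H^2$-regularity of $u^1_j$ follows by two one-dimensional integrations of the $L^2$-forcing; smoothness of $A_j$ and $B_j$ is used only to keep the equation strictly elliptic and to absorb coefficient derivatives.

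\emph{Step 2 (bulk).} With $u^1_j\in H^2(\Omega^1_j)$ established, the Robin data $B_j u^1_j$ on $\partial\Omega^0_i\cap\Omega^1_j$ is at least in $H^{3/2}$ of that face, while $g_i\in H^1(\partial\Omega\cap\partial\Omega^0_i)$ and $f_i\in L^2(\Omega^0_i)$ are given. Standard elliptic regularity for mixed Robin/Dirichlet problems on Lipschitz polyhedra (see Grisvard) then yields $u^0_i\in H^{3/2}(\Omega^0_i)$. If in addition $\Omega^0_i$ is convex, the classical convex-domain theory upgrades this to $H^2(\Omega^0_i)$, since the right-hand side is $L^2$ and the Robin and Dirichlet data are smooth enough to be cut off and absorbed.

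\emph{Main obstacle.} The delicate point is the corner analysis. One must verify that at each transition between a Dirichlet face (on $\partial\Omega$) and a Robin face (on an interface), and at every corner of $\Omega^0_i$ where interfaces abut, the leading singular exponent exceeds $1/2$ in the $H^1$-scale so that $H^{3/2}$ survives, and exceeds $1$ in the convex case so that $H^2$ is reached. This is standard but geometry-dependent, and it is at this point that the Lipschitz and polyhedral assumptions from Section~\ref{sec:problem} are genuinely used; the detailed exponent bookkeeping is what the appendix is presumably devoted to.
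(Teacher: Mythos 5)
Your overall route---pass to the strong form, treat each interface equation as a one-dimensional ODE with $L^2$ forcing, and treat each bulk subdomain as a mixed Dirichlet/Robin problem for the Laplacian---is the same as the paper's, and your interface step (integrating the $L^2$ right-hand side twice, using that the trace $u^0_i|_{\Omega^1_j}$ is in $L^2(\Omega^1_j)$) is essentially identical to the appendix. One structural remark: the bootstrap is not actually needed. The bulk regularity argument only requires the Robin datum $B_j u^1_j/A_i$ to lie in $L^2(\partial\Omega^0_i\cap\Omega^1_j)$, which already follows from $u^1_j\in H^1(\Omega^1_j)$; the paper accordingly treats the bulk and interface components independently (bulk first), and your upgraded $H^{3/2}$ Robin data buys nothing for the $H^{3/2}$ or $H^2$ bulk conclusion.

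The genuine gap is in Step 2. The assertion that ``standard elliptic regularity for mixed Robin/Dirichlet problems \dots yields $u^0_i\in H^{3/2}$'' is precisely the statement to be proved, and it is false without a hypothesis on the corner openings: at a Dirichlet--Robin transition corner of opening $\omega$ the leading singular function behaves like $r^{\pi/(2\omega)}$, which fails to lie in $H^{3/2}$ as soon as $\omega>\pi$. You correctly flag the exponent bookkeeping as the main obstacle, but you neither carry it out nor identify the geometric input that makes it come out right, so the proposal proves nothing beyond the $H^1$ regularity you start from. The paper's proof consists exactly of this bookkeeping: it invokes the singular decomposition of Mghazli (rather than Grisvard), with exponents $\lambda_{k,\ell}=\ell\pi/\omega_k$ at corners whose two edges carry the same type of condition and $\lambda_{k,\ell}=(\ell-\tfrac12)\pi/\omega_k$ at mixed corners, and then uses the standing convexity of the global domain $\Omega$ to guarantee $\omega_k<\pi$ at every Dirichlet--Robin transition (so $\lambda_{k,1}=\pi/(2\omega_k)>\tfrac12$) together with $\omega_k<2\pi$ at interior Robin--Robin corners (so $\lambda_{k,1}=\pi/\omega_k>\tfrac12$). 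Every singular exponent therefore exceeds $\tfrac12$, each singular function lies in $H^{\sfrac{3}{2}}(\Omega^0_i)$, and the decomposition gives $u^0_i\in H^{\sfrac{3}{2}}(\Omega^0_i)$; in the convex case the paper argues that the singular contributions disappear and $H^2$ follows. To repair your proof you must import this corner analysis (or an equivalent one) explicitly, and in particular make the role of the convexity of $\Omega$ visible, since it is the only place that assumption enters.
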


In order to ensure that the method converges according to theory, we solve a few problems with several different mesh sizes $h$. The problems we choose for this analysis are a domain with eight infinite interfaces and a domain with 27 finite interfaces, see Figure \ref{fig:implementation_domain}. Infinite interfaces here means straight lines that pass through the entire domain, leading to convex subdomains, while finite lines leads to non-convex subdomains. For all of these cases, we let $A_i = 1$, $A_j = 1$, $B_j = 1$ and $f = e^{-10 \sqrt{(x_1 - \frac{1}{2})^2 + (x_2 - \frac{1}{2})^2}}$ on both the interfaces and the bulk areas. The mesh edges are split in two in each refinement of the mesh. The solution using eight interfaces is presented in Figure \ref{fig:solution}.

The energy norm $a(u_{\text{ref}}, u_{\text{ref}})^{1/2}$ of the reference solution $u_{\text{ref}}$ is then calculated for each setup and compared to the finite element approximations using different levels of refinement. The initial mesh is fitted to the interfaces and minimal in the sense that interfaces are not refined. We then refine the initial mesh uniformly, using regular refinement, four times. The reference solution is constructed using five uniform refinements. The convergence result is presented in Figure \ref{fig:convergence_energy_norm}. We detect a linear convergence rate, which agrees with the theory for the case of convex subdomains, and a convergence rate that exceeds the predicted one for the case of non-convex subdomains.




\begin{figure}[h!]
\begin{subfigure}{0.45\textwidth}
	\includegraphics[width=\textwidth]{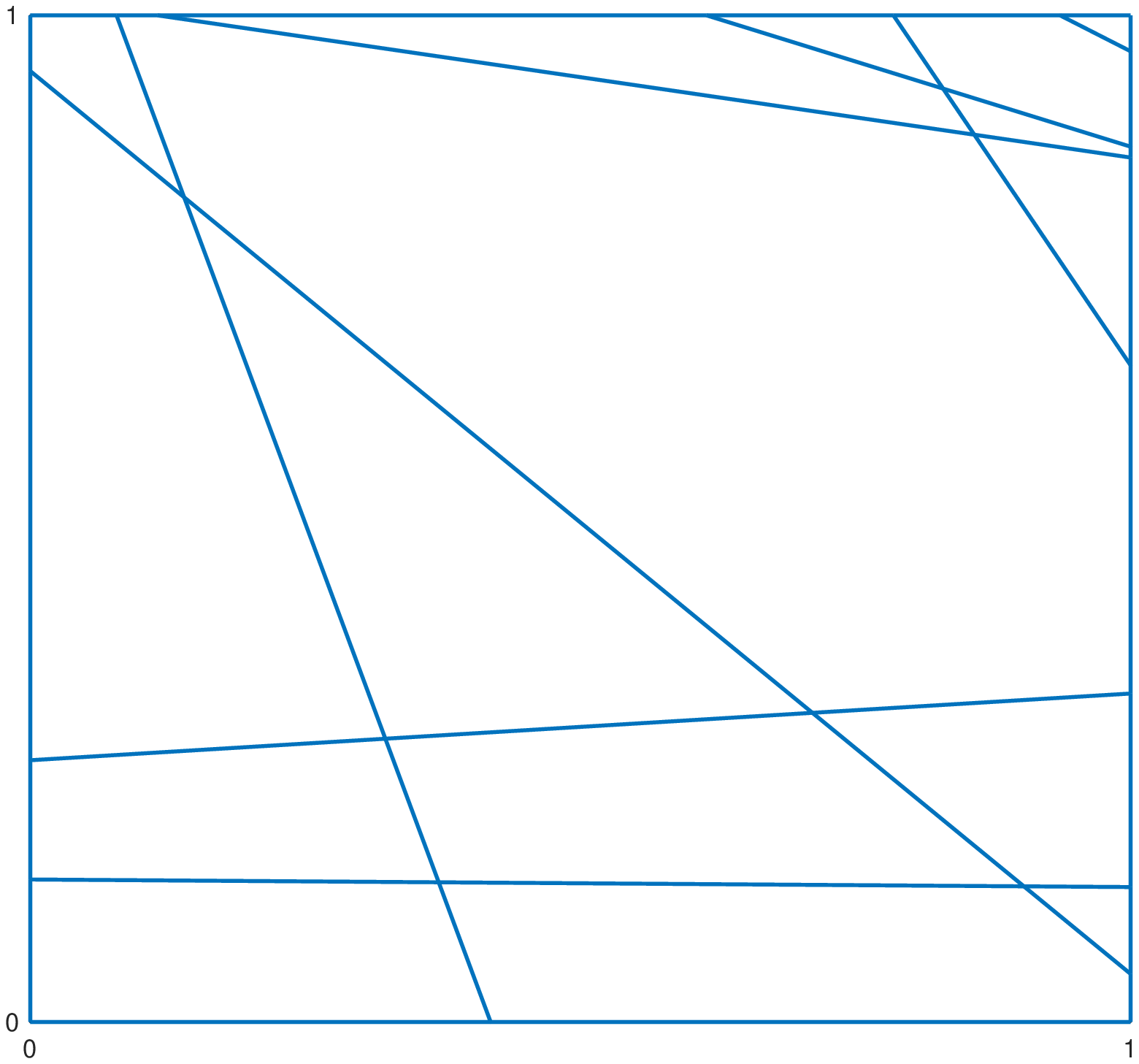}
	\caption{Infinite interfaces.}
\end{subfigure}
\begin{subfigure}{0.45\textwidth}
	\includegraphics[width=\textwidth]{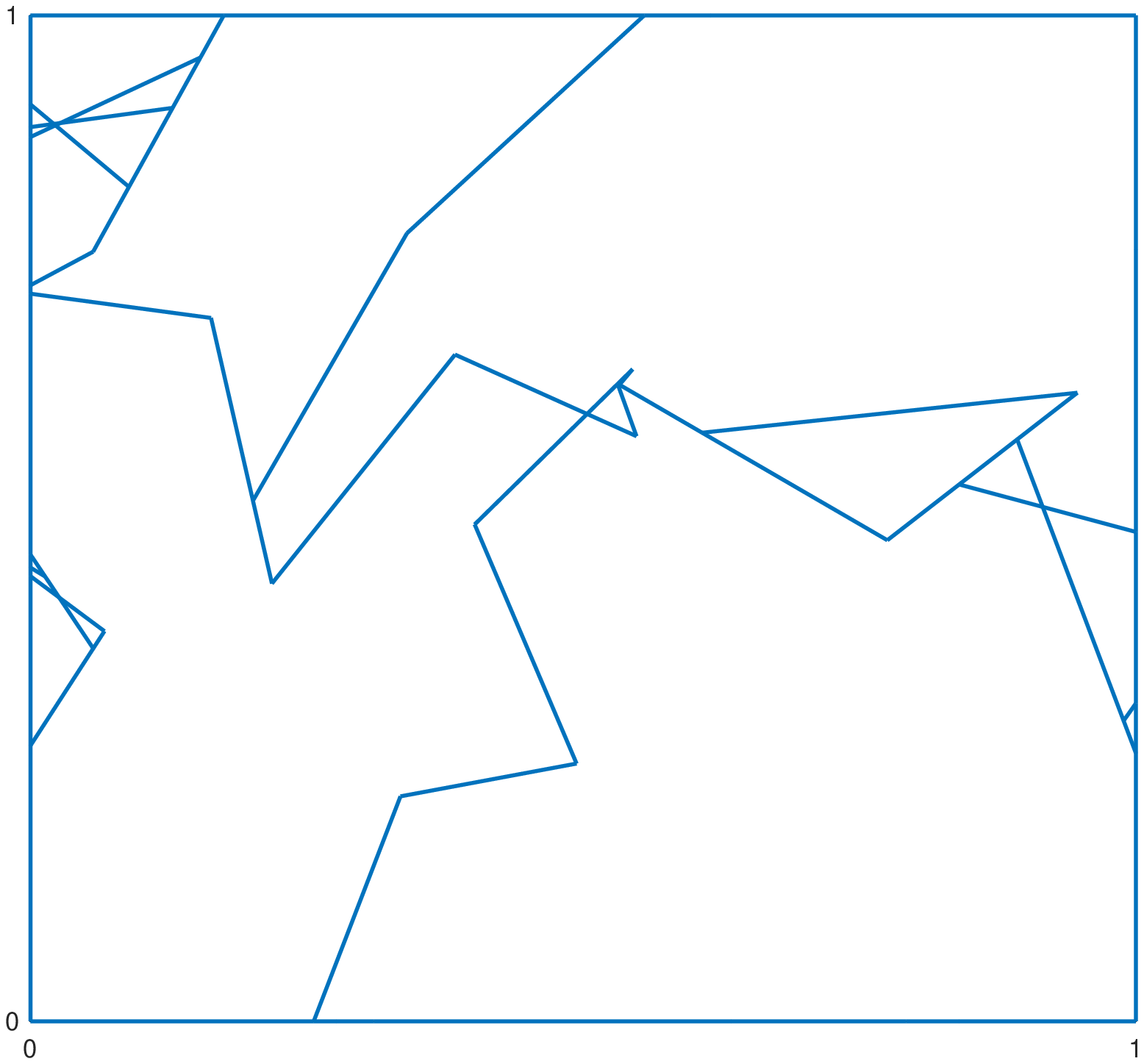}
	\caption{Finite interfaces.}
\end{subfigure}
\caption{Domains in Section~\ref{sec:num_reg_conv} on which the convergence analysis is performed.}\label{fig:implementation_domain}
\end{figure}

\begin{figure}[h!]
	\includegraphics[scale=.45]{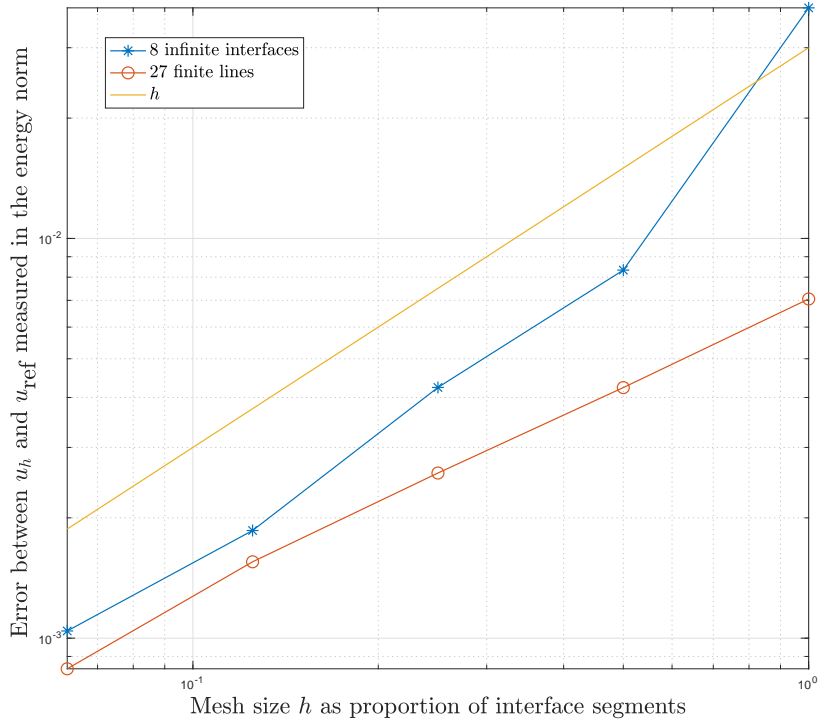}
	\caption{The energy norm of the error between approximations $u_h$ and the reference solution $u_\text{ref}$.}
	\label{fig:convergence_energy_norm}
\end{figure}

\begin{figure}[h!]
	\includegraphics[scale=.6]{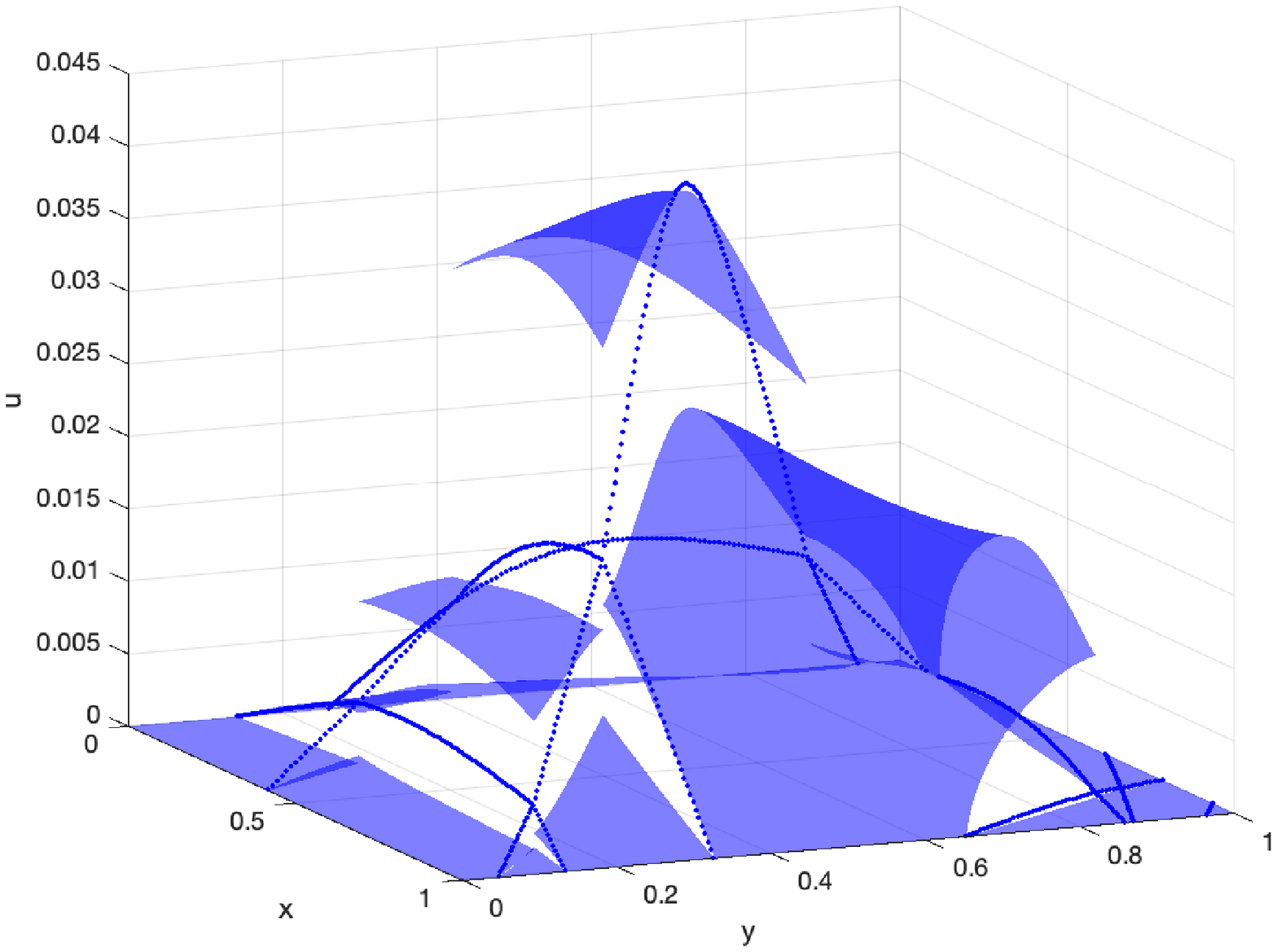}
	\caption{The reference solution $u_\text{ref}$ to the first numerical example with eight interfaces.}
	\label{fig:solution}
\end{figure}

\subsection{Convergence of the iterative solver}
\label{sec:num_conv_iter}
We solve equation \eqref{eq:weak_problem} on a unit square with roughly 200 infinite interfaces, using $A_i = 1$ and $f = e^{-10 \sqrt{(x_1 - \frac{1}{2})^2 + (x_2 - \frac{1}{2})^2}}$ on both the interfaces and the bulk areas, see Figure~\ref{fig:implementation_domain2} for an illustration. The effect of varying the coefficients $A_j$ and $B_j$ are tested for two and three different cases respectively. The coefficients $B_j$, describing the coupling between the bulk and interface subdomains, either all have the value $0.01$ (weakly coupled), $1$ (moderately coupled), or $100$ (strongly coupled). The coefficients $A_j$ either all have the value 1, or are uniformly distributed random numbers in the interval $[0.01, 1]$ on the different interface mesh edges.
For these setups, the condition numbers of the full matrix varies in the range $10^{11}$ to $10^{15}$.
Therefore, solving the system iteratively, without a preconditioner, leads to very poor convergence (around 5\,000 -- 10\,000 iterations in the presented examples).

The number of iterations needed to reach convergence using the preconditioner presented in Section \ref{sec:iterative} for $H=1/8$ (the grid in Figure~\ref{fig:implementation_domain2}) and $H=1/16$ are shown in Table \ref{table:pcg_infinite}. We see rapid convergence of the conjugate gradient method given the high condition number for the initial system matrix. We note that a higher number of iterations for the case $B_j=100$, corresponding to strong coupling, is needed. The convergence rate seems to be independent of, or only depend weakly on $H$ and $h$, as suggested in the theory. The dense distribution of interfaces and the rapid convergence suggests that Theorem \ref{thm:iterative} is applicable. It is however difficult to prove this statement since it depends on the exact connectivity and density properties on the network of interfaces. 


We also solve the problem on a domain with finite lines, leading to non-convex subdomains. We do this for roughly 200 lines of length $\leq 0.2$. The parameters are the same as in the case of infinite interfaces, so we examine weakly, moderately and strongly coupled cases where $A_j$ is either constantly 1 och uniformly distributed between 0.01 and 1. The condition numbers of the full matrix vary in the range $10^8$ to $10^{13}$. The number of iterations using a preconditioner are shown in Table \ref{table:pcg_finite}. 
Again, we notice that the preconditioner does a very good job and that the convergence rate is independent or only weakly dependent on the method parameters $H$ and $h$. We again need a higher number of iterations for the case $B_j=100$ corresponding to strong coupling. 

\begin{figure}[h!]
\begin{subfigure}{0.49\textwidth}
	\includegraphics[width=\textwidth]{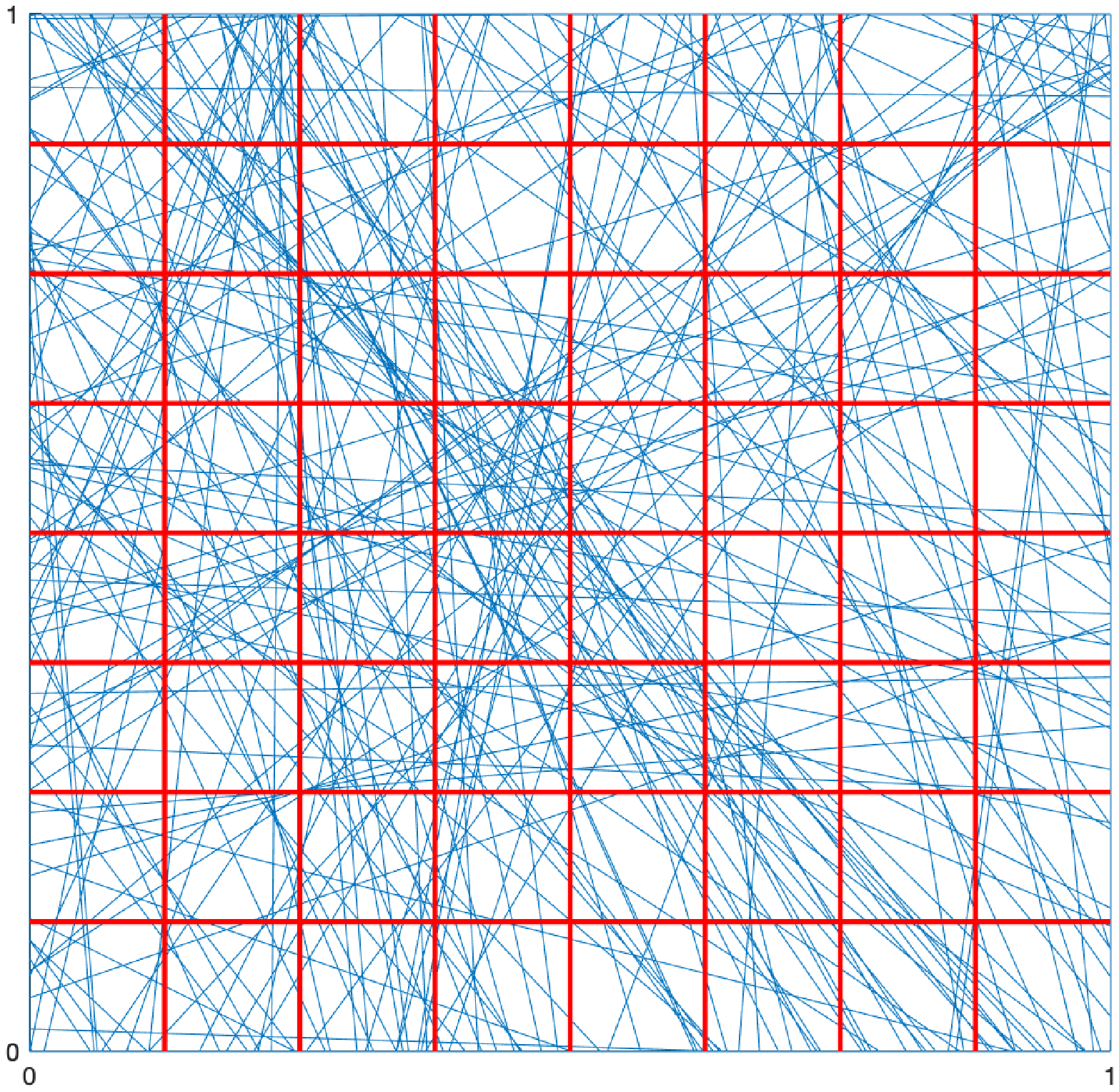}
	\caption{Infinite interfaces}
\end{subfigure}
\hfill
\begin{subfigure}{0.49\textwidth}
	\includegraphics[width=\textwidth]{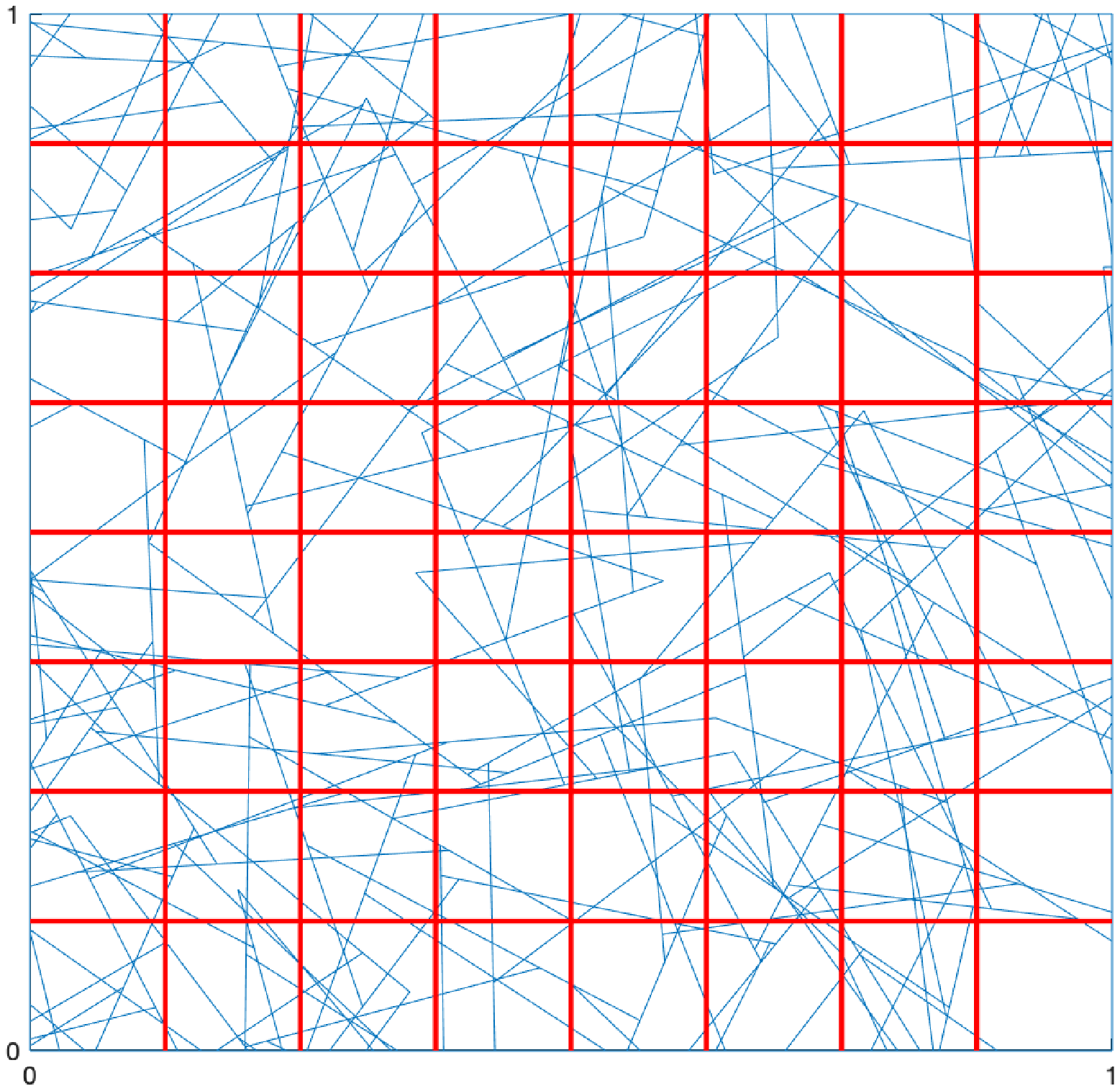}
	\caption{Finite intefaces}
\end{subfigure}
\caption{Domains in Section~\ref{sec:num_conv_iter} using coarse grid mesh size $H=1/8$ for the subspace correction.}\label{fig:implementation_domain2}
\end{figure}


\begin{table}[h!]
\caption{Number of iterations until convergence using the preconditioner, on a domain with approximately 200 infinite interfaces.}\label{table:pcg_infinite}
\begin{subtable}[h]{.48\textwidth}
\caption{$H = 1/8$}
	\begin{tabular*}{\textwidth}{@{\extracolsep{\fill}} | l | r | r |} \hline 
	\diagbox{$B_j$}{$A_j$} & $[0.01, 1]$ & 1 \\ \hline
	0.01 		& 31 		& 25		 \\
	\hline
	1 		& 33 		& 26		 \\
	\hline
	100 		& 73 		& 50		 \\
	\hline
	\end{tabular*}
\end{subtable}
\hfill
\begin{subtable}[h]{.48\textwidth}
\caption{$H = 1/16$}
	\begin{tabular*}{\textwidth}{@{\extracolsep{\fill}} | l | r | r |} \hline 
	\diagbox{$B_j$}{$A_j$} & $[0.01, 1]$ & 1 \\ \hline
	0.01 		& 36 		& 27		 \\
	\hline
	1 		& 39 		& 28		 \\
	\hline
	100 		& 87 		& 60		 \\
	\hline
	\end{tabular*}
\end{subtable}
\end{table}

\begin{table}[h!]
\caption{Number of iterations until convergence using the preconditioner, on a domain with approximately 200 finite interfaces.}\label{table:pcg_finite}
\begin{subtable}[h]{.48\textwidth}
\caption{$H = 1/8$}
	\begin{tabular*}{\textwidth}{@{\extracolsep{\fill}} | l | r | r |} \hline 
	\diagbox{$B_j$}{$A_j$} & $[0.01, 1]$ & 1 \\ \hline
	0.01 		& 29 		& 24		 \\
	\hline
	1 		& 31 		& 25		 \\
	\hline
	100 		& 65 		& 46		 \\
	\hline
	\end{tabular*}
\end{subtable}
\hfill
\begin{subtable}[h]{.48\textwidth}
\caption{$H = 1/16$}
	\begin{tabular*}{\textwidth}{@{\extracolsep{\fill}} | l | r | r |} \hline 
	\diagbox{$B_j$}{$A_j$} & $[0.01, 1]$ & 1 \\ \hline
	0.01 		& 36 		& 28		 \\
	\hline
	1 		& 37 		& 29		 \\
	\hline
	100 		& 75 		& 54		 \\
	\hline
	\end{tabular*}
\end{subtable}

\end{table}


%

%

\subsection*{Acknowledgements}

The second author is supported by the Swedish Research Council project number 2019-03517.

\printbibliography

\begin{appendices}
\section{Regularity}\label{app:regularity}

In this section, we prove that the regularity of a solution to equation (\ref{eq:weak_problem}) posed on a two dimensional polygonal Lipschitz domain is $H^{\sfrac{3}{2}}$ in the bulk regions in general and $H^2$ in case of convex subdomains. We also show that we have $H^2$-regularity on the interface segments. 


The strong formulation of the problem posed on $\Omega_i^0$ reads
\begin{equation*}
\begin{aligned}
	- \div A_i \nabla u_i^0 &= f_i, && \text{ in } \Omega^0_i, \\
	u_i^0 &= g_i, &&\text{ on } \overline{\Omega_i^0} \cap \partial \Omega, \\
	n_{\Omega_i^0} \cdot A_i \nabla u_i^0 + B_j u_i^0 &= B_j u_j^1, && \text{ on } \partial \Omega_i^0 \cap \Omega_j^1.
\end{aligned}
\end{equation*}
With the help of the product rule and the fact that $A_i \geq \underline{\alpha} > 0$, this problem can be rewritten as
\begin{equation}
\begin{aligned}
	- \Delta u_i^0 &= \frac{f_i + \nabla A_i \cdot \nabla u_i^0}{A_i}, && \text{ in } \Omega^0_i, \\
	u_i^0 &= g_i, &&\text{ on } \overline{\Omega_i^0} \cap \partial \Omega, \\
	n_{\Omega_i^0} \cdot \nabla u_i^0 + \frac{B_j}{A_i} u_i^0 &= \frac{B_j}{A_i} u_j^1, && \text{ on } \partial \Omega_i^0 \cap \Omega_j^1.
	\label{eq:bulkproblem}
\end{aligned}
\end{equation}

We allow the subdomain $\Omega^0_i$ to be non-convex, but assume $\Omega$ to be convex. We let $\Gamma_k$ denote the segments of the boundary of $\Omega^0_i$. We let $\omega_k$ denote the internal angle at corner $P_k$, between $\Gamma_k$ and $\Gamma_{k+1}$. Further, we let $D$ be the set of indices of boundary segments that have Dirichlet boundary conditions, i.e. the segments from $\partial \Omega$. Similarly, we let $R$ be the set of indices of the boundary segments equipped with Robin boundary conditions. We let $S$ be the set of indices $k$ for which $\Gamma_k$ and $\Gamma_{k+1}$ both belong to either $D$ or $R$, and $M$ be the set of indices $k$ for which $\Gamma_k$ and $\Gamma_{k+1}$ belong to one each of them.

We have that $\omega_k < \pi$, $k \in D \cup M$, because of our assumption that $\Omega$ is convex. We also have that $\omega_k < 2 \pi$ for all other $k$. In Figure \ref{fig:omega_i} we visualize an example of a subdomain $\Omega^0_i$.
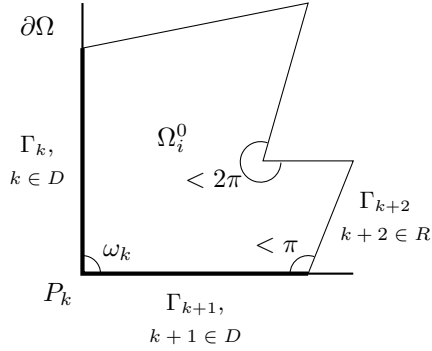
\begin{figure}
\begin{centering}
\begin{tikzpicture}[scale=3]
	\draw [thick] (0,1.2) -- (0,0) -- (1.2,0);
	\draw [ultra thick] (0,1) -- (0,0) -- (1,0);
	\draw (0,1) -- (1,1.2) -- (0.8,0.5) -- (1.2,0.5) -- (1,0);
	\node at (-0.2,1.1) {$\partial \Omega$};
	\node [align=center] at (-.2, .5) {\small $\Gamma_{k},$\\ \scriptsize $k \in D$};
	\node [align=center] at (.5, -.2) {\small $\Gamma_{k+1},$\\ \scriptsize $k+1 \in D$};
	\node [align=center, right] at (1.1, .25) {\small $\Gamma_{k+2}$\\ \scriptsize $k+2 \in R$};
	\node [align=center, below left] at (0,0) {$P_{k}$};
	\draw (0,0.08) arc [radius = .08, start angle = 90, end angle = 0];
	\node [align=center, above right] at (0.05,0.02) {$\omega_k$};
	\draw (.92,0) arc [radius = .08, start angle = 180, end angle = 70];
	\node [align=center, above left] at (1,0.05) {$< \pi$};
	\draw (.82,0.58) arc [radius = .09, start angle = 70, end angle = 358];
	\node [align=left, below left] at (0.75,0.5) {$< 2 \pi$};
	\node at (.4,.6) {$\Omega^0_i$};
\end{tikzpicture}
\caption{An example of a subdomain $\Omega^0_i$. Angles between different types of boundary conditions have to be smaller than $\pi$, while other angles have to be smaller than $2 \pi$.}
\label{fig:omega_i}
\end{centering}
\end{figure}
We are now ready to prove Theorem \ref{regularity_bulk}.

\begin{proof}
From Lax-Milgram we have that $u^0_i \in H^1(\Omega^0_i)$ and $u^1_j \in H^1(\Omega^1_j)$.
Since $u^0_i \in H^1(\Omega^0_i)$, along with the assumptions that $f_i \in L^2(\Omega^0_i)$ and $A_i$ is smooth and positive, we have that $\paren{f_i + \nabla A_i \cdot \nabla u_i^0}/A_i \in L^2(\Omega^0_i)$. Further, we have that $g_i \in H^{1}(\overline{\Omega_i^0} \cap \partial \Omega)$, $B_j/A_i \in C^\infty(\partial \Omega_i^0 \cap \Omega_j^1)$, $B_j/A_i > \underline{\beta} / \norm{A_i}_{\infty} > 0$ and $B_j u_j^1 /A_i \in L^2(\partial \Omega_i^0 \cap \Omega_j^1)$.
We have that $\omega_k/(2 \pi) \notin \N$ for $k \in S$ and $\omega_k/(2 \pi) + 1/2 \notin \N$ for $k \in M$.
Thus, all the requirements of Theorem 2.1 and Proposition 3.1 from \cite{Mghazli} are fulfilled for our problem \eqref{eq:bulkproblem} and hence there exist constants $\alpha^\ell_k$ depending on $f_i$, $A_i$, $B_j$ and $u_j^1$, such that 
\begin{equation}
	u_i^0 - \sum_{0 < \lambda_{k,\ell} < 1} \alpha^\ell_k u^{(0)}_{k,\ell} \in H^2(\Omega^0_i),
	\label{eq:regularity_sum}
\end{equation}
where
\begin{equation}
	\lambda_{k,\ell} = \begin{cases}
		\frac{\ell \pi}{\omega_k}, &k \in S,\\
		\paren{\ell - \frac{1}{2}}\frac{\pi}{\omega_k}, &k \in M,
	\end{cases}
	\qquad \ell = 1,2,3,...
\end{equation}
and
\begin{equation}
u^{(0)}_{k,\ell}(r, \theta) = 
	\begin{cases}
		r^{\lambda_{k,\ell}} \sin \paren{\lambda_{k,\ell} \theta}, \qquad k \in D \cap S \text{ or } R \cap M,\\
		r^{\lambda_{k,\ell}} \cos \paren{\lambda_{k,\ell} \theta}, \qquad k \in R \cap S \text{ or } D \cap M,
	\end{cases}
\end{equation}
if $\lambda_{k,\ell} \notin \N$, which is the only case present in \eqref{eq:regularity_sum}.
The variables $r$ and $\theta$ are the local polar coordinates such that $P_k = (0,0)$, $\theta = 0$ along $\Gamma_{k+1}$ and $\theta = \omega_k$ along $\Gamma_k$.

For both $k \in S$ and $k \in M$, we have that $\lambda_{k,l} > \frac{1}{2}$. With $\frac{1}{2} < \lambda_{k,l} < 1$, we have that $u_{k,l}^{(0)} \in H^{\sfrac{3}{2}}(\Omega_i^0)$. Thus we conclude that $u_i^0 \in H^{\sfrac{3}{2}}(\Omega^0_i)$.
If all subdomains $\Omega_i^0$ are convex and the requirements of Theorem \ref{regularity_bulk} are fulfilled, we see from equation \eqref{eq:bulkproblem} that $u_i^0 \in H^2(\Omega^0_i)$.




Isolating one interface segment $\Omega_j^1$, one gets the partial differential equation:
\begin{equation*}
\begin{aligned}
  - \div_{\tau} A_j \nabla_{\tau} u^1_j - \sum_{i \,:\, (i,j) \in E_0} B_j(u^0_i - u^1_j) &= f_j, &&\Omega^1_j, \\
	u_j^1 &= g_j,
	&&\overline{\Omega^1_j} \cap \partial \Omega, \\
	u^1_j &= u^1_{j'},
	&&\partial \Omega^1_j \cap \partial \Omega^1_{j'}, \\
	\sum_{(j,k) \in E_1} n_{\Omega^1_j} \cdot A_j \nabla_\tau u^1_j &= 0,
	&&\partial \Omega^1_j \cap \Omega^2_k,
\end{aligned}
\end{equation*}
which, with $\Omega \subset \R^2$, becomes:
\begin{equation*}
\begin{aligned}
	-\frac{\dd^2}{\dd x^2} u_j^1 &= 
	\frac{
		f_j +  \sum_{i \,:\, (i,j) \in E_0} B_j (u_i^0 - u_j^1)+ \frac{\dd}{\dd x} A_j \frac{\dd}{\dd x} u_j^1 
	}{A_j},
	&&\Omega^1_j, \\
	u_j^1 &= g_j,
	&&\overline{\Omega^1_j} \cap \partial \Omega, \\
	u^1_j &= u^1_{j'},
	&&\partial \Omega^1_j \cap \partial \Omega^1_{j'}, \\
	\sum_{(j,k) \in E_1} A_j \frac{\dd}{\dd x} u^1_j &= 0,
	&&\partial \Omega^1_j \cap \Omega^2_k,
\end{aligned}
\end{equation*}
where $x$ is a parametrisation of the interface segment.

Since $f_j \in L^2(\partial \Omega^0_i \cap \Omega^1_j)$, and $B_j$ and $A_j$ are smooth, we can conclude that $u_j^1 \in H^2( \Omega^1_j)$. 

\end{proof}

%
%
%
%
%
%
%
%
%
%
%

\end{appendices}

\end{document}